\DeclarePairedDelimiterX\Ioo[2]{\lparen}{\rparen}{#1,#2}
\DeclarePairedDelimiterX\Iof[2]{\lparen}{\rbrack}{#1,#2}
\DeclarePairedDelimiterX\Ifo[2]{\lbrack}{\rparen}{#1,#2}
\DeclarePairedDelimiterX\Iff[2]{\lbrack}{\rbrack}{#1,#2}
\DeclarePairedDelimiterX\Iffint[2]{\lbrack\!\lbrack}{\rbrack\!\rbrack}{#1,#2}
\DeclareFontFamily{OMX}{mlmex}{}
\DeclareFontShape{OMX}{mlmex}{m}{n}{%
   <->mlmex10%
   }{}%
\theoremstyle{plain}
\newtheorem{theo}{Theorem}
\newtheorem{prop}[theo]{Proposition}
\theoremstyle{definition}
\newtheorem{rema}[theo]{Remark}
\else\@ifpackagewith{hyperref}{dvipdfmx}{}{\usepackage{breakurl}}\fi
\newcommand\plusbas{\vphantom{X^X}}
\newcommand\sD[1][]{%
  \if\relax\detokenize{#1}\relax\else{}_{\plusbas#1}\mskip-1mu\relax\fi
  \mathsf{D}%
}
\newcommand\NN{\mathbb{N}}
\newcommand\CC{\mathbb{C}}
\newcommand\dx{\mathrm{d}\mskip-1mu x}
\title[Kempner-Irwin asymptotics]{On the asymptotics of Kempner-Irwin sums}
\author[J.-F. Burnol]{Jean-François Burnol}
\address{Université de Lille,
  Faculté des Sciences et technologies,
  Département de mathématiques,
  Cité Scientifique,
  F-59655 Villeneuve d'Ascq cedex,
  France}
\email{jean-francois.burnol@univ-lille.fr}
\date{v3 of January 2026 is an English translation of the French original
  2404.13763v2 from April 2024.  A few changes were made addressing referee
  remarks from July 2025, and references were updated. Subsequent v4 fixes
  a few left-over formatting issues and adds a link to the dedicated repository.}
\subjclass[2020]{Primary 11Y60, 11M06; Secondary 11A63, 44A60, 30C10, 41A60;}
\keywords{Ellipsephic numbers, Kempner series, asymptotic expansions}
\DeclareMathOperator{\Un}{\mathbf{1}}
\def\dql{\texttt{\textquotedblleft}}
\def\dqr{\texttt{\textquotedblright}}
\begin{document}

\begin{abstract}
  Let $I(b,d,k)$ be the subseries of the harmonic series keeping the integers
  having exactly $k$ occurrences of the digit $d$ in base $b$. We prove the
  existence of an asymptotic expansion to all orders in descending powers of $b$,
  for fixed $d$ and $k$, of $I(b,d,k)-b\log(b)$. We explicitly give,
  depending on cases, either four or five terms.  The coefficients involve the
  values of the zeta function at the integers.
\end{abstract}

\maketitle

\onehalfspacing

\section{Ellipsephic harmonic sums and main theorem}

The convergence of the Kempner harmonic sums (1914, \cite{kempner})
$K(b,d)=\sum' m^{-1}$, where denominators are constrained not to contain the
radix-$b$ digit $d$, is the subject of Theorem 144 in Hardy and Wright's work
\cite{hardywright}. This topic is therefore quite widely known.  More
generally, let $I(b,d,k)=\sum^{(k)}m^{-1}$, where the notation means that
$m>0$ has $k$ occurrences of the digit $d$ in base $b$. Here again (Irwin
1916, \cite{irwin}), the proof of convergence is almost immediate. But
obtaining precise numerical values is difficult, and these series generally
converge very slowly. It seems that we had to wait until 1979 and the work of
Baillie \cite{baillie1979} for the $K(10,d)$ for $d=0,1,\dots,9$ to be
tabulated with each $20$ fractional decimal digits, whereas Kempner had only
indicated $K(10,9)<90$ and Irwin had obtained
$\np{22.4}<K(10,9)<\np{23.3}$. See \cite{baillie2008} for Baillie
computational algorithms for $I(b,d,k)$, $k>0$.

Since the early work of Kempner and Irwin, this topic has continued to
arouse interest and has been the subject of various studies and generalizations
(estimations, infinite products, Dirichlet series, combinatorics, etc.) and
we provide the interested reader with some references:
\cite{allouchecohenetalActa1987, alloucheshallit1989,
  alloucheshallitCUP2003, alloucheshallitsondow2007,
  allouchemorin2023, allouchehumorin2024, aloui2017, baillie1979,
  baillie2008, burnolblocks,
  farhi, fischer, gordon2019, huyining2016, irwin, kempner,
  klove1971, kohlerspilker2009, lubeckponomarenko2018, mukherjeesarkar2021,
  nathanson2021integers, nathanson2022ramanujan, schmelzerbaillie,
  segalleppfine1970, wadhwa1978, walker2020}.

The sequence of integers lacking certain digits in a base $b$ has been the
subject of advanced work in analytic number theory. We will cite here only a
few articles from a vast body of literature: \cite{erdosmauduitsarkozy1998},
\cite{dartygemauduit2000}, \cite{maynardInventiones2019}.
In this context, Ch.\@~Mauduit introduced the neologism ``ellipsephic'' to refer
to integers that lack certain digits in a base $b$ (details on the etymology
can be found in \cite{allouchemorin2023}). We can therefore speak of
``ellipsephic harmonic sums'' for $K(b,d)$ and, by extension, also for
$I(b,d,k)$, $k>0$, even if the digit $d$ is not completely missing.

The main focus of this paper is the asymptotic expansion of $I(b,d,k)$ for
$b\to\infty$. Here next is a combined (shortened) version of the main
results:
\begin{theo}\label{thm:main}
  For fixed $d$ and $k$, $I(b,d,k)-b\log(b)$ has an asymptotic expansion
  to all orders in inverse powers of $b$, the first occurring power of $b$
  being $b^{-2k-1}$
  if $d=0$ and $b^{-2k+1}$ if $d>0$.
  For $d>0$ and $k=0$, the term proportional to $b$ is
  $-b\log(1+\frac1d)$.

In detail, with the implicit constants in $O(b^{-n})$ all depending on both $d$ and $k$:
\begin{align*} 
&\mathrel{\phantom{\implies}}I(b,0,0) = b\log(b)+\frac{\zeta(2)}{2b}-\frac{\zeta(3)}{3b^2} + O(b^{-3})
\\ 
k>0&\implies I(b,0,k)=b\log(b)+\frac{\zeta(2)}{2b^{2k+1}}+\frac{k\zeta(2)}{2b^{2k+2}}+O(b^{-2k-3})
\\ 
d>0&\implies I(b,d,0) = b\log(b) 
\begin{aligned}[t] 
&-b \log(1+\frac1d)+\frac{(d+\frac12)(\zeta(2)-d^{-2})}{b} 
\\ 
&-\frac{(d^2+d+\frac13)\zeta(3)-\frac{9d^2+8d+2}{6d^3(d+1)}}{b^2}+O(b^{-3}) 
\end{aligned}
\\ 
d>0&\implies I(b,d,1) = b\log(b) + \frac{d+\frac12}{d^2b}-\frac{9d^2+8d+2}{6d^3(d+1)b^2}+O(b^{-3})
\\
\begin{pmatrix} 
d>0\\k>1
\end{pmatrix}
&\implies I(b,d,k) = b\log(b) + \frac{d+\frac12}{d^2b^{2k-1}}
+\frac{(d+\frac12)\bigl((k-2)d+k-3\bigr)}{d^2(d+1)b^{2k}} + O(b^{-2k-1})
\end{align*}
\end{theo}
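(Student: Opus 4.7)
The plan is to extract a recursive functional equation by decomposing each integer $m \ge b$ as $m = bm' + a$, where $a \in \{0,\dots,b-1\}$ is the lowest digit and $m' = \lfloor m/b\rfloor$; then $m$ lies in $E_k$ (the set of integers with exactly $k$ occurrences of $d$ in base $b$) iff $m'$ lies in $E_{k-[a=d]}$. Using the binomial expansion $(bm'+a)^{-s} = (bm')^{-s}\sum_{j\ge 0}\binom{-s}{j}(a/(bm'))^j$, valid since $a < b \le bm'$, and summing over $a$ and $m' \ge 1$, one obtains, after isolating the $j=0,\,a\ne d$ contribution and solving, a functional equation for the ellipsephic Dirichlet series $Z_k(s) = \sum_{m\in E_k} m^{-s}$. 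Specialised to $s=1$ it takes the clean form
\[
I(b,d,k) = b\,\sigma(b,d,k) + I(b,d,k-1) + \sum_{j\ge 1}(-1)^j b^{-j}\bigl[\,S_j(b,d)\,Z_k(j+1) + d^j\,Z_{k-1}(j+1)\,\bigr],
\]
where $\sigma(b,d,k)$ is the finite single-digit contribution (a partial harmonic sum, possibly adjusted by $\pm 1/d$), $S_j(b,d) = \sum_{a=0,\,a\ne d}^{b-1} a^j$ is a Faulhaber polynomial in $b$ of degree $j+1$ (minus $d^j$), and $I(b,d,-1)$, $Z_{-1}(s)$ are set to $0$.

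The second step is to expand $\sigma(b,d,k)$ and each $Z_k(j+1)$ ($j\ge 1$) asymptotically in $1/b$. The single-digit partial harmonic sum is handled by Euler--Maclaurin. For $s \ge 2$ one applies the same functional equation at $s$, which can now be solved directly since $(1-1/b)b^{1-s} < 1$: the $m < b$ part of $Z_k(s)$ contributes, to leading order, $\zeta(s) - d^{-s}$ for $k=0$ or $d^{-s}$ for $k=1$, and so on, while the $m \ge b$ part is bounded by $\sum_{m\ge b} m^{-s} = O(b^{1-s})$ and is in turn expandable by iterating the recursion. In this way each $Z_k(s)$ acquires an asymptotic expansion in $1/b$ to any prescribed order. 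Substituting these expansions back into the $s=1$ identity and collecting powers of $b$ yields the desired expansion of $I(b,d,k)$.

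The main obstacle is the bookkeeping of delicate cancellations. The naive expansion $bH_{b-1} = b\log b + \gamma b - \tfrac12 + O(1/b)$ produces a spurious $\gamma b$ term that does not appear in the theorem; it must cancel against contributions of the form $-b\sum_{j\ge 1}(-1)^j\zeta(j+1)/(j+1)$ arising from the leading $\zeta(j+1)$ part of $Z_0(j+1)$ combined with the leading Faulhaber $S_j(b,d) \sim b^{j+1}/(j+1)$, via the classical identity $\sum_{k\ge 2}(-1)^k\zeta(k)/k = \gamma$ derived from $\log\Gamma(2) = 0$. For $d > 0,\,k = 0$, the residual $b$-coefficient then re-sums to the closed form $-\log(1+1/d)$. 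For $k \ge 1$, the expansion is obtained inductively from the previous case, and each additional forced occurrence of $d$ introduces, through the $d^j Z_{k-1}(j+1)$ branch, a factor $b^{-2}$ suppression in the first nontrivial correction, accounting for the prescribed leading powers $b^{-2k+1}$ ($d > 0$) and $b^{-2k-1}$ ($d = 0$). Once these patterns are verified, the explicit four or five terms stated in the theorem follow by carrying the recursion to the required finite order and reading off the Faulhaber and zeta-value coefficients.
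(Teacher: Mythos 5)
Your functional equation is correct (it is the paper's recursions for $U_k(z)$, specialized to integer arguments, in Dirichlet-series clothing), and the identity $\sum_{n\ge2}(-1)^n\zeta(n)/n=\gamma$ does kill the spurious $\gamma b$; the route is a legitimate Fischer/Baillie-style alternative to the paper's measure-theoretic one. But there are two genuine gaps. First, the $j$-series at $s=1$ cannot be treated as a power series in $1/b$: since $b^{-j}S_j\sim b/(j+1)$ for $j\ll b$ and $Z_k(j+1)$ stays bounded away from $0$ (e.g.\ $Z_0(j+1)\ge1$ whenever $1\in E_0$), the $j$-th term has size $\asymp b/j$ and only starts decaying once $j\gtrsim b$, through the factor $(1-1/b)^j$ hidden in $S_j$. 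So you may not truncate at a fixed $J$; to extract the coefficient of any given power of $b$ you must substitute the $1/b$-expansions of $b^{-j-1}S_j$ and of $Z_k(j+1)$ and then resum over \emph{all} $j$ at each order, which requires error bounds for those expansions that are uniform in $j$. This uniformity is exactly what your sketch never establishes (and what the paper deliberately avoids needing: in its series the $m$-th term carries an explicit factor $b^{-m-1}u_{k;m}\le b^{-m}/(m+1)$, so truncation at $M$ terms immediately gives $O(b^{-M})$). When the resummations over $j$ are actually performed, one recovers $\psi(1+x)-\psi(1)=\sum_j(-1)^{j-1}\zeta(j+1)x^j$ and $\log\Gamma$, i.e.\ precisely the digamma formulation of the paper — so the fix essentially forces you back onto its path.

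Second, the central quantitative assertion — that each increment of $k$ shifts the leading term by exactly $b^{-2}$, so that $I(b,d,k)-b\log b=O(b^{-2k+1})$ for $d>0$ — is not proved by the remark about the $d^jZ_{k-1}(j+1)$ branch. In your recursion the three pieces $b\sigma_k$, $I(b,d,k-1)$ and the $j$-sum each contribute at order $b$ (for instance, for $k=1$ one needs $b/d+\bigl(-b\log(1+\tfrac1d)\bigr)+b\bigl(\log(1+\tfrac1d)-\tfrac1d\bigr)$ to cancel exactly, and then the $\zeta(2)/b$ terms produced by $Z_0(2)$ and $Z_1(2)$ must also cancel between the two branches to leave only $(d+\tfrac12)/(d^2b)$); establishing that these cancellations persist through order $b^{-2k+2}$ for every $k$ is an induction of the same depth as the paper's Proposition \ref{prop:develu} on the order of vanishing of $w_{k;m}=b/(m+1)-u_{k;m}$ at $c=b^{-1}=0$, and cannot be taken for granted. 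Until both points are supplied — uniform-in-$j$ control of the expansions, and the inductive cancellation lemma quantifying the gain per unit of $k$ — the proposal is a plausible programme rather than a proof, and in particular none of the stated coefficients have actually been derived.
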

In the following text, we explicitly give four or five terms in all cases:
Prop.\@ \ref{prop:develKb0} for $d=0$ and $k=0$ (five terms), Prop.\@
\ref{prop:develIb0k} for $d=0$ and $k>0$ (four terms), Prop.\@
\ref{prop:develKbd} for $d>0$ and $k=0$ (five terms), Prop.\@
\ref{prop:develIbd1} for $d>0$ and $k=1$ (five terms), Prop.\@
\ref{prop:develIbd2} for $d>0$ and $k=2$ (five terms), Prop.\@
\ref{prop:develIbdk} for $d>0$ and $k\geq3$ (four terms). If we wanted a fifth
term in this last case we should distinguish between $k=3$, $k=4$, and
$k\geq5$.

The values of the Riemann zeta function at positive integers appear
in all these expansions (even if here we do not see them everywhere in the
first two terms), just as they had already appeared in our
previous work \cite{burnollargeb} where we established:
\begin{equation*}
  K(b,b-1) = b\log(b) - \frac{\zeta(2)}{2b} -
  \frac{3\zeta(2)+\zeta(3)}{3b^2} - \frac{2\zeta(2)+4\zeta(3)+\zeta(4)}{4b^3}
  + O(b^{-4})
\end{equation*}
In the present paper, the digit $d$ is fixed independently of the base
$b$. The strategy, partly due to the increased difficulty introduced by $k$,
differs from that of \cite{burnollargeb}: firstly, instead of starting from
the geometrically convergent series we introduced in this topic
(\cite{burnolkempner,burnolirwin}), which use the moments $u_{k;m}$ of certain
measures $\mu_k$ on $\Ifo{0}{1}$, we will begin by expressing the difference
with $b\log(b)$ as an integral against these measures using the digamma
function, generalizing (for the situation considered here of a single excluded
digit) \cite{burnoldigamma}. Secondly, we do not seek to obtain uniformity
in $m$ of certain approximations of the values of the moments $u_{k;m}$.

Theorem \ref{thm:main} lacks uniformity in $k$ and therefore, in itself, does
not imply the theorem
\cite{farhi,segalleppfine1970,burnolirwin} on the limit
for $k\to\infty$, whereas the initial formulas (such as the one from Proposition \ref{prop:Ibdk}), from which the
asymptotics presented in this work are derived, yield this
limit directly (once it is known from \cite{burnolirwin} that the measures
$\mu_k$ converge weakly as $k\to\infty$ to $b\dx$).

The first two sections are dedicated to the study of the moments $u_{k;m}$ as
analytic (rational) functions of the variable $c=b^{-1}$, mainly at the formal
level for $c\to0$ since we are not attempting to provide uniform statements in
$m$. In the third section, we express $I(b,d,k)-b\log(b)$ as a combination of
integrals, against the measures $\mu_k$ (and $\mu_{k-1}$), of the digamma
function, thus generalizing \cite{burnoldigamma} which had dealt with
$k=0$. The use of this function (and the resulting appearance of the
$\zeta(n)$) has a precedent in the work of Fischer \cite{fischer}. A
difference appears between $d=0$ (which is close to the $d=b-1$ of
\cite{burnoldigamma}) and $d>0$ in the subsequent analysis of the integral
formula. The four sections that follow deal successively with $d=k=0$, $d=0$
and $k>0$, $d>0$ and $k=0$, $d>0$ and $k>0$. In all cases, we transform the
integrals into series with geometric convergence (which could be used
numerically), giving $I(b,d,k)-b\log(b)$. These convergent series (see, for
example, for $d>0$ and $k>0$, Proposition \ref{prop:Ibdk}) involve the moments
of the measures defined in \cite{burnolkempner,burnolirwin}. We transform them into
asymptotic series in inverse powers of $b$. Some contributions are analyzed
via Euler-Maclaurin summations.
The coefficients become quite quickly
complex and large, and moreover, their (polynomial) dependence on $k$
stabilizes for increasingly large $k$ values.
Depending on the case, we give between four and five terms, and for this,
we will need to know for the moments
only two (exceptionally three) correction terms to their poles at $c=0$.

The theorems for the various cases of $(d,k)$ are followed with numerical data
over a range of $b$'s, in order to check the consistency of their
predictions. To our knowledge, there are only three known ways to accurately
calculate $I(b,d,k)$ numerically: either using Baillie's code \cite{baillie2008}, or
the one of the author \cite{burnolirwin}%
\footnote{The repository
  \url{https://gitlab.com/burnolmath/irwin} holds the latest code versions.},
or the (not yet implemented) code that would use the series mentioned in the
previous paragraph to directly give the deviation from $b\log(b)$. We used the
\textsf{SageMath} code provided with \cite{burnolirwin} to control the
behavior of $I(b,d,k)-b\log(b)$ as $b$ increases. This algorithm also uses the
moments $u_{k;m}$, but in a purely numerical way, and in linear proportion for
the $m$-range to the number of digits requested; whereas our work here
primarily uses the formal properties of $u_{k;m}$ as analytic functions of
$b$, and requires, for the given terms, only $u_{j;0}$, $u_{j;1}$, \dots, up
to at most $u_{j;4}$, and for $j\in\{k,k-1,k-2\}$. The fact that the numerical
results appear to behave consistently when $b$ is multiplied by $2$, $5$, or
$10$ is therefore a mutual validation of the numerical code and the formulas
from \cite{burnolkempner,burnolirwin} on which it is based, and of the
formulas proven in the present paper.

\section{Measures and moments associated with Irwin sums}

In this section, we review the tools of
\cite{burnolkempner,burnolirwin,burnoldigamma} and some results
necessary here.
For an integer $b>1$, we denote by $\sD = \{0,\dots,b-1\}$ the set of digits
for the base $b$. The union of the Cartesian products of the $\sD^l$ for
$l\geq0$ is called the string space and is denoted by $\boldsymbol{\sD}$. The
length $l\geq0$ of a string $X$ is the integer such that $X\in\sD^l$, and we
denote it by $l(X)$ or $|X|$. A string $X=(d_l,\dots,d_1)$ is also written
$X=\dql d_l\dots d_1\dqr$ or simply $X = d_l\dots d_1$ without the quotation
marks and defines an integer $n(X) = d_{l} b^{l-1} + \dots + d_1b^{0}$ which
belongs to $\Iffint{0}{b^l-1}$. We also associate with $X$ the real number
$x(X) = n(X)/b^{|X|}\in \Ifo{0}{1}$. When $X$ is the empty string, $x(X)=0$.

Let $0\leq d<b$ and $k\geq0$ be fixed integers. We define a measure $\mu_k$ on
the space $\boldsymbol{\sD}$ (equipped with the sigma-algebra of all subsets)
of strings by assigning zero mass to any string that does not have
\emph{exactly} $k$ occurrences of the integer $d$, and mass $b^{-|X|}$ to each
$X$ that has \emph{exactly} $k$ occurrences of $d$. For example, if $k=0$,
$b=10$, $d=9$, the empty string receives a mass of $1$, the string \texttt{99}
receives zero mass, and the string \texttt{001122} receives a mass of
$10^{-6}$. If now, still with $b=10$ and $d=9$, we have $k=1$, the empty
string receives zero mass, the string \texttt{99} again has zero mass, and
\texttt{001122} has zero mass while \texttt{0011922} will have mass $10^{-7}$.

It turns out (\cite{burnolirwin}) that $\mu_k$ confers to
the space $\boldsymbol{\sD}$ of all strings a finite total mass which
is equal to $b$, regardless of the values of $k$ and $d$.

By abuse of notation, we also denote by $\mu_k$ the measure on $\Ifo{0}{1}$
(equipped with the Borel sigma-algebra) obtained by push-forward under
$X\mapsto x(X)$. The measure $\mu_k$ also depends, of course, on $b$ and $d$;
the notation is simplified for readability.

The $\mu_k$ on $\Ifo{0}{1}$ converge weakly to $b$ times the Lebesgue measure
on $\Ifo{0}{1}$. Two separate proofs in \cite{burnolirwin} have established,
firstly, the convergence of the moments $u_{k;m} = \mu_k(x^m)$ of these
measures to those of $b\dx$, and secondly, the convergence for any interval
$I$ of $\mu_k(I)$ to $b$ times the length $|I|$ of $I$.
An extension to counting the occurrences of a multi-digit subblock has been
proven in \cite{burnolblocks}.

It has also been shown in \cite{burnolirwin} that the $u_{k;m}$ form an increasing
sequence for each fixed $m$ (thus with limit $b/(m+1)$ for $k\to\infty$) and
that the complementary moments $v_{k;m} = \mu_k((1-x)^m)$ form, for a fixed
$m$, a decreasing sequence in $k$ (also with limit $b/(m+1)$).

The Kempner-Irwin harmonic sum:
\begin{equation}
I(b,d,k) =\sum\nolimits^{(k)}\frac1m\;,
\end{equation}
where the exponent $(k)$ means that only integers $m>0$ with $k$ occurrences
of the digit $d$ in base $b$ are retained, can be expressed in several
ways starting from the measure $\mu_k$, the simplest expression being:
\begin{equation}\label{eq:Ikmu}
I(b,d,k) =
\int_{\Ifo{b^{-1}}{1}}\frac{\mu_k(dx)}{x}\;.
\end{equation}
The theorem $\lim_{k\to\infty} I(b,d,k)=b\log(b)$
(\cite{farhi,segalleppfine1970}) can therefore be seen via \eqref{eq:Ikmu} as
a consequence of the convergence theorem from $\mu_k$ to $b\dx$.

In
\cite{burnolkempner} and \cite{burnolirwin}, the functions
\begin{equation}\label{eq:Uk}
U_k(z) = \int_{\Ifo{0}{1}} \frac{\mu_k(\dx)}{z+x}
\end{equation}
play an important role via the equations we now recall. By the definition of
the measure $\mu_k$, $U_k(z)$ is, for all $z\in\CC\setminus\Iff{-1}{0}$, equal
to $\sum m^{-1}$ where only those \emph{complex numbers} $m$ (which are
\emph{integers} if $z$ is an integer) appear, which are of the shape $m = 10^l
z + n(X)$, $|X| = l$, $k(X) = k$, $l\geq0$.  With $z$ being an integer $n$, $n>0$, they are
the integers which ``start with $n$'' and whose ``tail'' has exactly $k$ times
the digit $d$.  It follows from the interpretation as a series (which, for an
integer $z=n>0$, is therefore a subseries of the harmonic series, but not
necessarily of the series defining $I(b,d,k)$), that for all
$z\in\CC\setminus\Iff{-1}{0}$ we have:
\begin{equation}\label{eq:U0rec}
U_0(z) = \frac1z + \sum_{a\in\sD,a\neq d} U_0(bz + a)
\end{equation}
and for $k\geq1$:
\begin{equation}\label{eq:Ukrec}
U_k(z) = U_{k-1}(bz+d) + \sum_{a\in\sD,a\neq d} U_k(bz + a)
\end{equation}
Note that indeed $bz+a\notin\Iff{-1}{0}$ for all $a\in\sD$
and $z\in\CC\setminus\Iff{-1}{0}$.

We can integrate against
$\mu_k$ any bounded function $g$ (without continuity conditions) and
we have the following equations \cite[Lemma 17]{burnolirwin}:
\begin{equation}\label{eq:lemint0}
\int_{[0,1)} g(x)\,d\mu_0(x) = g(0)
+ \frac1b\sum_{a\neq d}\int_{[0,1)} g(\frac{a+x}b)\,d\mu_0(x)
\end{equation}
and, for $k\geq1$: 
\begin{equation}\label{eq:lemint1} 
\int_{[0,1)} g(x)\,d\mu_k(x) = 
\frac1b\sum_{\substack{0\leq a < b\\a\neq d}}\int_{[0,1)} g(\frac{a+x}b)\,d\mu_k(x) 
+ \frac1b \int_{[0,1)} g(\frac{d+x}b)\,d\mu_{k-1}(x) 
\end{equation}
Relationships \eqref{eq:U0rec} and \eqref{eq:Ukrec} are special cases of this.

\section{Moments as analytic functions of the base}

The moments $u_{k;m}=\int_{\Ifo{0}{1}}x^m\mu_k(\dx)$ form, for each fixed $k$,
a decreasing sequence tending to zero. Except for $b=2$, $d=1$, and
$k=0$, in which case $u_{0;m}=0$ for all $m\geq1$, they are strictly positive
and form a strictly decreasing sequence for $m\to\infty$. We also know
\cite[Prop.\@ 20]{burnolirwin} that for each fixed $m$, the sequence $k\mapsto u_{k;m}$ is
strictly increasing and has the limit $b/(m+1)$.

The $u_{k;m}$ satisfy recurrences in $k$ and $m$ established in
\cite[eq.(15),(16)]{burnolirwin}. To state them, let us first define some
coefficients $\gamma_j$:
\begin{equation}
\gamma_j = \sum_{\substack{0\leq a<b\\a\neq d}} a^j\,.
\end{equation}
Again, the notation is very abbreviated because $\gamma_j$ depends on the integers
$b$ and $d$. In particular, $\gamma_0$ equals $b-1$.

It holds for
$k=0$ and $m\geq1$:
\begin{equation}\label{eq:recurru0}
(b^{m+1} - b +1)u_{0;m} = \sum_{j=1}^m \binom{m}{j}\gamma_ju_{0;m-j}\,,
\end{equation}
and $u_{0;0} = b$.
For $k\geq1$ and all $m\geq0$, there holds:
\begin{equation}\label{eq:recurruk}
(b^{m+1} - b +1) u_{k;m}=\sum_{j=1}^{m}\binom{m}{j}\gamma_ju_{k;m-j}
+ \sum_{j=0}^{m}\binom{m}{j}d^ju_{k-1;m-j}
\end{equation}
Given the value of $\gamma_0$, this can also be written
equivalently as:
\begin{equation}\label{eq:recurrukbis}
b^{m+1} u_{k;m}=\sum_{j=0}^{m}\binom{m}{j}\gamma_ju_{k;m-j}
+ \sum_{j=0}^{m}\binom{m}{j}d^ju_{k-1;m-j},
\end{equation}
and we can perform the analogous transformation of \eqref{eq:recurru0}.

We have $u_{k;0} = b$ for all $k$.

For $m=1$, the recurrence relations are written:
\begin{align}
u_{0;1} &= \frac b2 -(d+\frac12)\frac b{b^2 - b +1}\\
u_{k;1} &= \frac{b(b^2-b)}{2(b^2 - b +1)} + \frac{u_{k-1;1}}{b^2 - b + 1}
\end{align}
We deduce an explicit formula for $u_{k;1}$:
\begin{prop}
For all $k\geq0$:
\begin{equation}
u_{k;1} = \frac b2 - \frac{b(2d+1)}{2(b^2-b+1)^{k+1}}
\end{equation}
\end{prop}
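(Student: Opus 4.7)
The plan is a straightforward induction on $k$, using the two recurrence formulas already displayed: the closed form for $u_{0;1}$ (which supplies the base case) and the linear recurrence $u_{k;1} = \frac{b(b^2-b)}{2(b^2-b+1)} + \frac{u_{k-1;1}}{b^2-b+1}$ (which drives the induction).

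For the base case $k=0$, I would just rewrite the given expression $\frac b2 - (d+\frac12)\frac b{b^2-b+1}$ by factoring $d+\frac12 = \frac{2d+1}{2}$, so it matches the claimed formula at exponent $k+1=1$.

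For the inductive step, assuming the formula holds at level $k-1$, I would substitute $u_{k-1;1} = \frac b2 - \frac{b(2d+1)}{2(b^2-b+1)^k}$ into the recurrence. The fixed-point part $\frac{b(b^2-b)}{2(b^2-b+1)}$ combines with $\frac{1}{b^2-b+1}\cdot\frac b2$ coming from the $\frac b2$ piece of $u_{k-1;1}$ to give $\frac{b(b^2-b+1)}{2(b^2-b+1)} = \frac b2$, while the remaining piece $\frac{1}{b^2-b+1}\cdot\bigl(-\frac{b(2d+1)}{2(b^2-b+1)^k}\bigr)$ produces exactly the $-\frac{b(2d+1)}{2(b^2-b+1)^{k+1}}$ term of the target formula.

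There is no real obstacle here: the recurrence is a first-order linear relation with constant coefficient $\frac{1}{b^2-b+1}$, and $\frac b2$ is manifestly its fixed point (consistent with the known limit $u_{k;1}\to b/2$ as $k\to\infty$), so the claimed closed form is essentially the unique solution with the prescribed initial datum. The only thing to be careful about is the algebraic identity $\frac{b(b^2-b)}{2(b^2-b+1)} + \frac{b}{2(b^2-b+1)} = \frac{b}{2}$, which collapses the two constant contributions cleanly.
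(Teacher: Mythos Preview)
Your proof is correct and follows essentially the same approach as the paper, which simply remarks that one checks the proposed formula is compatible with the recurrence relation. Your write-up is in fact more detailed than the paper's one-line proof, and the algebra (the identity $\frac{b(b^2-b)}{2(b^2-b+1)} + \frac{b}{2(b^2-b+1)} = \frac{b}{2}$ and the handling of the geometric factor) is all correct.
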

\begin{proof}
A simple check that the proposed formula compatible with the recurrence relation.
\end{proof}

We will now obtain results relating to $u_{k;m}$ viewed as
analytic functions of $b$, or of $c=b^{-1}$.
Let us define, for all integers $k$ and $m$:
\begin{equation}
w_{k;m} = \frac{b}{m+1}-u_{k;m}
\end{equation}
With these new quantities, the equation \eqref{eq:recurrukbis} (which
applies for $k\geq1$) is:
\begin{equation}\label{eq:recurrukter}
 b^{m+1} \Bigl(-w_{k;m} + \frac{b}{m+1}\Bigr)=
\begin{aligned}[t]
&\sum_{j=0}^{m}\binom{m}{j}\gamma_j\Bigl(-w_{k;m-j}+\frac{b}{m+1-j}\Bigr)\\
&+ \sum_{j=0}^{m}\binom{m}{j}d^j \Bigl(-w_{k-1;m-j}+\frac{b}{m+1-j}\Bigr)
\end{aligned}
\end{equation}
Terms other than those involving $w_{k;m-j}$
and $w_{k-1;m-j}$ cancel each other out. Indeed
\begin{equation}\label{eq:foo} 
\forall x\quad \sum_{j=0}^m \binom{m}{j}\frac{b x^j}{m-j+1} = \sum_{j=0}^m \frac b{m+1}\binom{m+1}{j} x^j
= \frac{b}{m+1}\bigl((x+1)^{m+1} - x^{m+1} \bigr)\,,
\end{equation}
and therefore
\begin{equation}\label{eq:sommebinom} 
\sum_{0\leq a<b}\sum_{j=0}^m \binom{m}{j}a^j\frac{b}{m-j+1} = \frac{b}{m+1}b^{m+1}
\end{equation}
Noting now that $\gamma_j + d^j = \sum_{0\leq a<b} a^j$, we see that
\eqref{eq:sommebinom} combined with \eqref{eq:recurrukter} produces for the
$w_{k;m}$ the same recurrence as for the $u_{k;m}$:
\begin{equation}\label{eq:recurwkm}
(b^{m+1} - b +1) w_{k;m}=\sum_{j=1}^{m}\binom{m}{j}\gamma_jw_{k;m-j}
+ \sum_{j=0}^{m}\binom{m}{j}d^jw_{k-1;m-j}
\end{equation}
(We used $\gamma_0=b-1$ to move $w_{k:m}$ to the left-hand side only). But
the sums actually stop at $j=m-1$ since $w_{k;0} = 0$ for all $k$.

The preceding was for $k\geq1$. For $k=0$, the calculation is slightly
different (but again uses \eqref{eq:foo} and \eqref{eq:sommebinom}):
\begin{align}
b^{m+1} \Bigl(-w_{0;m} + \frac{b}{m+1}\Bigr)
&\begin{aligned}[t]
&= \sum_{j=0}^m \binom{m}{j}\gamma_j\Bigl(-w_{0;m-j}+\frac {b}{m+1-j}\Bigr)\\
&= - \sum_{j=0}^m \binom{m}{j}\gamma_jw_{0;m-j}
+\frac b{m+1}\Bigl(b^{m+1}-(d+1)^{m+1}+d^{m+1}\Bigr)
\end{aligned}
\\ (b^{m+1} - b +1)w_{0;m}
&=\sum_{j=1}^m
\binom{m}{j}\gamma_jw_{0;m-j}+\frac b{m+1}\Bigl((d+1)^{m+1}-d^{m+1}\Bigr)
\end{align}

Now let $c=b^{-1}$. As this paper studies the limit $b\to\infty$,
we consider $c$ as a quantity tending to zero.

Recall that $c^{j+1}\sum_{0\leq a<b}a^j$ is a polynomial in the variable $c$
which can be expressed using Bernoulli numbers ($B_0=1$, $B_1=-\frac12$,
$B_2=\frac16$, $B_3=0$, $B_4=-\frac1{30}$, ...)  by the formula (the last
equality assuming $j\geq2$):
\begin{equation}
c^{j+1}\sum_{0\leq a<b}a^j = \sum_{p=0}^j \binom{j}{p}B_p\frac{c^p}{j+1-p}
= \frac1{j+1} - \frac c2 + \frac{jc^2}{12}+\dots
\end{equation}
The degree therefore depends on the parity of $j$. The quantities $c^{j+1}\gamma_j$
are also polynomials in $c$, assuming here that $d$ is fixed to a value
independent of $b$:
\begin{align}\label{eq:bar}
c^{j+1}\gamma_j &= \sum_{p=0}^j \binom{j}{p}B_p\frac{c^p}{j+1-p} - d^j c^{j+1}
\\
&=
\begin{cases}
1 - c & (j=0)
\\
\frac12 - \frac12 c - d c^2 & (j=1)
\\
\frac1{j+1} - \frac12 c + \frac{j}{12} c^2 + O^{(j)}_{c\to0}(c^3)&(j\geq2)
\end{cases}
\end{align}
The $O^{(j)}_{c\to0}(c^3)$ simply groups together in the polynomial expression
\eqref{eq:bar} for $c^{j+1}\gamma_j$ all terms of degree at least $3$ in
$c$. There is no uniformity in $j$ and we simply highlight the coefficients of
$c^0$, $c^1$, and $c^2$.

From the previous calculations, for $m\geq1$:
\begin{equation}\label{eq:recurw}
(1 - c^m + c^{m+1})w_{0;m} = \sum_{j=1}^m \binom{m}{j}c^{m-j}c^{j+1}\gamma_jw_{0;m-j}
+\Bigl((d+1)^{m+1}-d^{m+1}\Bigr)\frac{c^m}{m+1}
\end{equation}
We have
$w_{0;0} = 0$ and (as seen previously)
\begin{equation}
w_{0;1} = (d+\frac12)\frac{c}{1 - c + c^2}
\end{equation}
In the recurrence relation \eqref{eq:recurw} the summation actually takes
place over $1\leq j\leq m-1$ since $w_{0;0}=0$. As in \cite{burnollargeb}, it
follows that the $w_{0;m}$ are rational fractions of $c$ whose poles lie
outside the open disk $D(0,\rho)$ with $\rho^2(\rho+1) = 1,
\rho\approx\np{0.755}$.

In the summation over $j$, the terms contributed by $j\leq m-2$ are
$O(c^3)$. Therefore, up to $O(c^3)$, it suffices to look at the contribution
of $j=m-1$, which only matters to $c^2$. The term
outside the sum is $(2d+1)c^2/3$ for $m=2$ and $O(c^3)$ for $m=3$ and $O(c^4)$
for $m\geq4$. We can therefore write $w_{0;m} = \alpha_{0;m} c^2 + O(c^3)$ and
we obtain
\begin{equation}
(1 - c^m + c^{m+1})\alpha_{0;m} c^2 =
m c \frac1{m} (d+\frac12) c + \delta_{m=2}(3d^2+3d+1)c^2/3 + O(c^3).
\end{equation}
Therefore $\alpha_{0;2} = d^2+2d+\frac{5}{6}$ while $\alpha_{0;m} = d + \frac12$ for
$m\geq3$. We can then obtain the $c^3$ term, leaving the proof using
mathematical induction to the reader:
\begin{align}
  \frac b2-u_{0;1}=w_{0;1} &= (d+\frac12)\frac{c}{1 - c + c^2}\\
  \frac b3-u_{0;2}=w_{0;2} &= (d^2+2d+\frac{5}{6})c^2+O(c^4)\\
  \frac b4-u_{0;3}=w_{0;3} &= (d+\frac12)c^2+(d+\frac12)d(d+1)c^3+O(c^4)\\
  \frac b{m+1}-u_{0;m}=w_{0;m} &= (d+\frac12)c^2-(d+\frac12)(\frac m2 - 1) c^3 +O_m(c^4)\quad(m\geq4)
\end{align}

We prove similar results for
$w_{k;m}=b/(m+1)-u_{k;m}$, $k\geq1$.
\begin{prop}\label{prop:develu}
  Let $k\geq0$. The $w_{k;m} = \frac{b}{m+1}-u_{k;m}$ are rational fractions
  in $c=b^{-1}$ whose poles lie outside the open disk of radius $\rho$ with
  $\rho^2(\rho+1)=1$, $\rho\approx\np{0.755}$. The order of the multiplicity
  of $c$ as zero of $w_{k;m}$ is $2k+2$ when $m\geq2$.
  We have $w_{k;1} = (d+\frac12)\frac{c^{2k+1}}{(1-c+c^2)^{k+1}}$.  For
  $k\geq2$:
\begin{align} 
w_{k;1} &= (d+\frac12)c^{2k+1} + (d+\frac12)(k+1) c^{2k+2} + O_k(c^{2k+3}) \\ 
w_{k;2} &= 2(d+\frac12)^2c^{2k+2} + (d+\frac12)^2(2k+2)c^{2k+3}+O_k(c^{2k+4})\\ 
w_{k;3} &= (d+\frac12) c^{2k+2} + (d+\frac12)(3d^2+\frac{2k-1}2)c^{2k+3}+O_k(c^{2k+4})\\
m\geq4\implies w_{k;m} &= (d + 1/2)c^{2k+2} - (d + 1/2)(m^2 - k-1)c^{2k+3} + O_{k,m}(c^{2k+4})
\end{align}
For $k\in\{0,1\}$ the same formulas apply with the exception of
\begin{align} 
w_{0;2}&= (d^2+2d+5/6)c^2+0\cdot c^3+O(c^4)\\
w_{0;3}&= (d + 1/2)c^2+(d + 1/2)d(d+1)c^3+O(c^4)\\
w_{1;2}&= 2(d + 1/2)^2c^4 + (3d^2 + 4d + 4/3)c^5 + O(c^6)
\end{align}
\end{prop}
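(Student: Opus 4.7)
The plan is to induct on $k$. The base case $k = 0$ is covered by the explicit computation preceding the statement. For $k \geq 1$, I would use the recurrence \eqref{eq:recurwkm}; multiplying through by $c^{m+1}$ and using $w_{k;0} = w_{k-1;0} = 0$, it becomes
\begin{equation*}
(1 - c^m + c^{m+1})\, w_{k;m} = \sum_{j=1}^{m-1} \binom{m}{j}\, c^{m-j}\, (c^{j+1}\gamma_j)\, w_{k;m-j} + c^{m+1} \sum_{j=0}^{m-1} \binom{m}{j}\, d^j\, w_{k-1;m-j},
\end{equation*}
where each $c^{j+1}\gamma_j$ is a polynomial in $c$ with leading expansion $\frac{1}{j+1} - \frac{c}{2} + O(c^2)$ as recorded in \eqref{eq:bar}. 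The case $m = 1$ reduces to $(1 - c + c^2)\, w_{k;1} = c^2\, w_{k-1;1}$, from which iteration on $k$ starting from $w_{0;1} = (d+\frac12)c/(1-c+c^2)$ immediately yields the closed form $w_{k;1} = (d+\frac12)\,c^{2k+1}/(1-c+c^2)^{k+1}$.

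For $m \geq 2$ I would run a secondary induction on $m$. The recurrence exhibits $w_{k;m}$ as a rational fraction in $c$ whose denominator is the product of $1 - c^m + c^{m+1}$ with those already established for $w_{k;m'}$ ($m' < m$) and $w_{k-1;m'}$ ($m' \leq m$); the localization of all roots outside $|c| < \rho$ is obtained exactly as in \cite{burnollargeb}. For the order of the zero at $c=0$ and the first two nonzero coefficients, the key observation is that the LHS equals $(1 + O(c^m))\, w_{k;m}$, so it suffices to determine the $c^{2k+2}$ and $c^{2k+3}$ coefficients of the RHS, using the inductive $c$-order bounds $w_{k;m-j} = O(c^{2k+2})$ and $w_{k-1;m-j} = O(c^{2k})$ for $m - j \geq 2$, together with the explicit forms of $w_{k;1}$ and $w_{k-1;1}$.

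For $m \geq 4$, only the $j = m-1$ term of the first sum contributes at orders $c^{2k+2}$ and $c^{2k+3}$: the remaining first-sum terms are of order $c^{m-j+2k+2} \geq c^{2k+4}$; in the second sum, the $j = m-1$ contribution is of order $c^{m+2k} \geq c^{2k+4}$ (since $w_{k-1;1}$ has order $2k-1$), while the $j \leq m-2$ terms are of order $\geq c^{m+1+2k} \geq c^{2k+5}$. Expanding $m\, c\, (c^m\gamma_{m-1}) = c - \frac{m}{2}c^2 + O(c^3)$ and multiplying by the known expansion of $w_{k;1}$ then yields the stated formula. For $m \in \{2,3\}$ the same bookkeeping applies, except that additional contributions from the second sum enter at orders $c^{2k+2}$ and $c^{2k+3}$ through $c^{m+1}w_{k-1;m-1}$ paired with the anomalously low order $2k-1$ of $w_{k-1;1}$, and (for $m=2$) through $c^3\, w_{k-1;2}$ as well.

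The main obstacle is combinatorial rather than conceptual: one must carefully separate, at each $(k, m)$, which of the several contributions land at orders $c^{2k+2}$ and $c^{2k+3}$. The exceptional subcases $w_{0;2}$, $w_{0;3}$, $w_{1;2}$ in the statement arise because $w_{0;2}$ and $w_{0;3}$ have leading coefficients differing from the generic $(d+\frac12)$ pattern (as computed from the distinguished $k = 0$ recurrence), and this difference propagates into $w_{1;2}$ through the $j = 0$ term of the second sum; for $k \geq 2$ these anomalies are absorbed into higher-order terms and the generic formulas apply.
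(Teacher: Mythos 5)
Your proposal is correct and follows essentially the same route as the paper: both rewrite the recurrence \eqref{eq:recurwkm} in the variable $c$, localize the poles as in \cite{burnollargeb}, use the exact formula for $w_{k;1}$, and then track which terms of the recurrence contribute at orders $c^{2k+2}$ and $c^{2k+3}$ (the paper works out $k=1$ and $k=2$ explicitly before invoking induction on $k$, whereas you set up the double induction on $k$ and $m$ from the start, an organizational difference only). One small indexing slip: for $m=3$ the extra low-order contribution of the second sum is its $j=m-1$ term $3d^2c^{m+1}w_{k-1;1}$, of order $c^{2k+3}$ and responsible for the $3d^2$ in the $c^{2k+3}$-coefficient of $w_{k;3}$, not $c^{m+1}w_{k-1;m-1}=c^{4}w_{k-1;2}$, which is $O(c^{2k+4})$.
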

\begin{proof}
  We already know the $w_{k;1} = (d+\frac12)\frac{c^{2k+1}}{(1-c+c^2)^{k+1}}$
  and therefore the start of their expansion in powers of $c$. So, for
  $k=0$ as well as for all $k$ and $m=1$, the statements made about the
  expansions in power series of $c$ have already been established.

  We can write \eqref{eq:recurwkm} with the variable $c=b^{-1}$:
\begin{equation}\label{eq:recurwkmc}
(1 - c^m + c^{m+1}) w_{k;m}=\sum_{j=1}^{m-1}\binom{m}{j}c^{m-j}c^{j+1}\gamma_jw_{k;m-j}
+ \sum_{j=0}^{m-1}\binom{m}{j}c^{m+1}d^jw_{k-1;m-j}
\end{equation}
It follows that these are rational fractions whose poles are among the roots
of $(1-c+c^2)(1-c^2+c^3)\dots(1-c^m+c^{m+1})$, and they have moduli at
least equal to $\rho$ (see \cite{burnollargeb} for details).

First, consider $w_{1;2}$. We have:
\begin{equation}\label{eq:recurw12c}
(1 - c^2 + c^3)w_{1;2} = 2c^3\gamma_1 w_{1;1} + c^3(w_{0;2} + 2 d w_{0;1})
\end{equation}
with $c^2\gamma_1 = \frac12 - \frac12 c - dc^2$. Thus:
  \begin{equation}
    (1+O(c^2))w_{1;2} = c(1 - c)(d+\frac12)(c^3+2c^4)
                     +c^3\biggl((d^2+2d+\frac{5}{6})c^2 +2d (d+\frac12)(c+c^2)\biggr) + O(c^6)
  \end{equation}
which, after simplification, gives the formula from the proposition.

Now we look at $w_{1;3}$. 
\begin{equation} 
(1 - c^3 + c^4)w_{1;3} = 3c^2 c^2\gamma_1 w_{1;2} + 3c c^3\gamma_2 w_{1;1} + c^4(w_{0;3}+3dw_{0;2}+3d^2w_{0;1}) 
\end{equation}
The contribution $3c^2 c^2\gamma_1 w_{1;2}$ is $O(c^6)$. The second term gives:
\begin{equation}
  3c c^3\gamma_2 w_{1;1} = 3c(\frac13 - \frac12 c)(d+\frac12)(c^3+2c^4) + O(c^6)= c^4(1+\frac c2)(d+\frac12) + O(c^6)
\end{equation}
The last term gives $c^4$ times $O(c^2)+O(c^2)+3d^2(d+\frac12)c$. In total, we find:
\begin{equation}
w_{1;3} = (d+\frac12)c^4 + \biggl(\frac12(d+\frac12)+3d^2(d+\frac12)\biggr) c^5 + O(c^6)
\end{equation}
which gives the formula from the proposition.

Now consider $w_{1;m}$ for $m>3$. In the second summation, the $w_{0;m-j}$ are
of at least order $c^2$ for $j\leq m-2$, and there is a global factor
$c^{m+1}$, so we can neglect them. The term in $j=m-1$ gives $m
c^{m+1}d^{m-1}w_{0;1}$, or $w_{0;1} = O(c)$, so we obtain $O(c^6)$ for all
$m\geq4$, and therefore it is also negligible. In the first sum of the
recurrence formula \eqref{eq:recurwkmc}, the terms with $j\leq m-2$ have a
coefficient $c^{m-j}$ of order at least $c^2$, which multiplies $w_{1;m-j}$,
which is $O(c^4)$. Therefore, up to $O(c^6)$, we can neglect them. The term in
$j=m-1$ is $mc(\frac1m - \frac12c)w_{1;1}$, so it contributes $c(1 -
\frac{m}{2}c)(d+\frac12)c^3(1
+2c)=(d+\frac12)(c^4+(2-\frac{m}{2})c^5)+O(c^6)$, and we obtain the formula
from the proposition.

Now we look at $w_{2;m}$. We already have the explicit formula for
$w_{2;1}$.
\begin{equation}
(1 - c^2 + c^3)w_{2;2} = 2c^3\gamma_1 w_{2;1} + c^3(w_{1;2} + 2 d w_{1;1})
\end{equation}
with $c^2\gamma_1 = \frac12 - \frac12 c + O(c^2)$. Thus:
  \begin{equation}
    (1+O(c^2))w_{2;2} = c(1 - c)(d+\frac12)(c^5+3c^6)
                     +c^3\biggl(2(d+\frac12)^2 c^4 +2d (d+\frac12)(c^3+2c^4)\biggr) + O(c^8)
\end{equation}
and after simplification $ w_{2;2} = 2(d+\frac12)^2 c^6 + 6(d+\frac12)^2 c^7 +
O(c^8)$ which is the formula from the proposition.

The formula for $w_{2;3}$ is:
\begin{equation}
  (1 - c^3 + c^4)w_{2;3} = 3c^2 c^2\gamma_1 w_{2;2} + 3c c^3\gamma_2 w_{2;1} + c^4(w_{1;3}+3dw_{1;2}+3d^2w_{1;1})
\end{equation}
The contribution $3c^2 c^2\gamma_1 w_{2;2}$ is $O(c^8)$. The second term
gives:
\begin{equation}
3c c^3\gamma_2 w_{2;1} = 3c(\frac13 - \frac12 c)(d+\frac12)(c^5+3c^6) + O(c^8)= c^6(1+\frac 32 c)(d+\frac12) + O(c^8)
\end{equation}
The last term gives $c^4$ times $O(c^4)+O(c^4)+3d^2(d+\frac12)c^3$. In total, we find:
\begin{equation}
w_{2;3} = (d+\frac12)c^6 + \biggl(\frac32(d+\frac12)+3d^2(d+\frac12)\biggr) c^7 + O(c^8)
\end{equation}
which gives the formula from the proposition.

We then calculate $w_{2;m}$ for $m\geq4$ as we did for $w_{1;m}$. Then, by
induction on $k$, we repeat the previous arguments, first calculating
$w_{k;2}$ and $w_{k;3}$, which require a separate discussion, before
dealing with $w_{k;m}$ for $m\geq4$.
\end{proof}

\section{Complementary moments}

The quantities $v_{k;m} = \int_{\Ifo{0}{1}}(1-x)^m\mu_k(\dx)$ satisfy,
according to \cite[eq. (25), (26)]{burnolirwin}, the following
recurrences. For $k\geq1$:
\begin{equation}\label{eq:recurrv} 
(b^{m+1} - b +1) v_{k;m}=\sum_{j=1}^{m}\binom{m}{j}\gamma_j'v_{k;m-j} 
+ \sum_{j=0}^{m}\binom{m}{j}(d')^j v_{k-1;m-j} 
\end{equation}
with $d'=b-1-d$ and $\gamma_j' = \sum_{\substack{0\leq a<b\\a\neq d'}} a^j$. And for $k=0$:
\begin{equation}\label{eq:recurrv0}
(b^{m+1} - b +1)v_{0;m} = b^{m+1} + \sum_{j=1}^m \binom{m}{j}\gamma_j'v_{0;m-j}
\end{equation}
Let $z_{k;m} = v_{k;m} - b/(m+1)$. In the calculations that follow, we will
use $\gamma_0' = b-1$ several times. First consider $k\geq1$:
\begin{equation} 
b^{m+1} \Bigl(z_{k;m} + \frac{b}{m+1}\Bigr)= 
\begin{aligned}[t] 
&\sum_{j=0}^{m}\binom{m}{j}\gamma_j'\Bigl(z_{k;m-j}+\frac{b}{m+1-j}\Bigr)\\ 
&+ \sum_{j=0}^{m}\binom{m}{j}(d')^j \Bigl(z_{k-1;m-j}+\frac{b}{m+1-j}\Bigr) 
\end{aligned} 
\end{equation}
And thanks to \eqref{eq:sommebinom} we obtain:
\begin{equation}\label{eq:recurzmk}
(b^{m+1} - b +1) z_{k;m}=\sum_{j=1}^{m}\binom{m}{j}\gamma_j'z_{k;m-j}
+ \sum_{j=0}^{m}\binom{m}{j}(d')^j z_{k-1;m-j}
\end{equation}
The sums over $j$ stop at $m-1$ since $z_{k;0} = 0$ for
all $k$.

For $k=0$, the calculations are slightly different, but again use
\eqref{eq:foo} and \eqref{eq:sommebinom}:
\begin{align}
b^{m+1} \Bigl(z_{0;m} + \frac{b}{m+1}\Bigr)
&= b^{m+1} + \sum_{j=0}^m \binom{m}{j}\gamma_j'\Bigl(z_{0;m-j}+\frac {b}{m+1-j}\Bigr)\\
&= b^{m+1} + \sum_{j=0}^m \binom{m}{j}\gamma_j'z_{0;m-j}+\frac b{m+1}\Bigl(b^{m+1}-(d'+1)^{m+1}+(d')^{m+1}\Bigr)\\
(b^{m+1} - b +1)z_{0;m} &= b^{m+1} + \sum_{j=1}^m \binom{m}{j}\gamma_j'z_{0;m-j}-\frac b{m+1}\Bigl((d'+1)^{m+1}-(d')^{m+1}\Bigr)
\end{align}

Let $c=b^{-1}$. Recall that $d'=b-1-d$, so using Bernoulli numbers,
\begin{equation}
c^{j+1}\gamma_j' = \sum_{p=0}^j \binom{j}{p}B_p\frac{c^p}{j+1-p} - c(1-(1+d)c)^j\;,
\end{equation}
is also a polynomial in $c$:
\begin{equation}
c^{j+1}\gamma_j' =
\begin{cases}
1 - c & (j=0)
\\
\frac12 - \frac32 c + (d+1) c^2 & (j=1)
\\
\frac1{j+1} - \frac32 c + jc^2(\frac1{12}+1+d) + \dots &(j\geq2) 
\end{cases}
\end{equation}
By the previous calculations
\begin{align} 
(1 - c^m + c^{m+1})z_{0;m} &= 1 + \sum_{j=1}^m \binom{m}{j}c^{m-j}c^{j+1}\gamma_j'z_{0;m-j} 
- \frac{(1-dc)^{m+1}-(1-(d+1)c)^{m+1}}{(m+1)c}\\
&=
\begin{aligned}[t] 
&\frac{(m+1)c - (1-dc)^{m+1} + (1-(d+1)c)^{m+1}}{(m+1)c} \\ 
&+ \sum_{j=1}^m \binom{m}{j}c^{m-j}c^{j+1}\gamma_j'z_{0;m-j}
\end{aligned}
\end{align}
As in \cite{burnollargeb}, it follows that the $z_{0;m}$ are
rational functions of $c$, which have their poles outside the open disk
$D(0,\rho)$, $\rho^2(\rho+1) = 1$, $\rho\approx\np{0.755}$. Furthermore,
$z_{0;0} = 0$ and
\begin{equation}
z_{0;1} = (d+\frac12)\frac{c}{1 - c + c^2}
\end{equation}
In the recurrence relation, since $z_{0;m} = 0$, the summation actually
involves $1\leq j\leq m-1$. Therefore, by recurrence, $z_{0;m}$ does
not have a constant term in its power expansion as a function of $c$. Let us write
$z_{0;m} = \alpha_{0;m} c + y _m c^2 + \dots$. In the summation over $j$, the
terms in $j\leq m-2$ are $O(c^3)$, so it suffices to look only at the
contribution of the term with $j=m-1$, and it contributes only to $c^2$:
\begin{align}
  (1 - c^m + c^{m+1})(\alpha_{0;m} c + \beta_{0;m} c^2 + \dots) &=
\begin{aligned}[t]
  &\frac{(m+1)c - (1-dc)^{m+1} + (1-(d+1)c)^{m+1}}{(m+1)c} \\
  &+mc(\frac1{m-1+1}+O(c))z_{0;1}+ O(c^3)
\end{aligned}
\\
&=
\begin{aligned}[t]
  &\frac{(2d+1)m}2 c - \frac{(3d^2+3d+1)m(m-1)}{6} c^2 \\&+(d+\frac12)c^2 +
  O(c^3)
\end{aligned}
\end{align}
This gives for $m\geq2$:
\begin{align}
  \alpha_{0;m} &= \frac{(2d+1)m}2\\
  \beta_{0;m} &= d+\frac12 - \frac{(3d^2+3d+1)m(m-1)}{6}
\end{align}
It turns out that these formulas are also valid for $m=1$.
Therefore, for $k=0$ and $m\geq1$:
\begin{equation}
v_{0;m} = \frac{b}{m+1} + \frac{(2d+1)m}2 b^{-1} -
\Bigl(\frac{(3d^2+3d+1)m(m-1)}{6}-(d+\frac12)\Bigr) b^{-2} + 0_m(b^{-3})
\end{equation}

We will now obtain a similar result for the
$z_{k;m}=v_{k;m}-b/(m+1)$, $k\geq1$. First,
using the notation from the previous section, for all $k$,
$u_{k;1}+v_{k:1} = b$, therefore $z_{k;1} = w_{k:1}$ is given by the explicit formula:
\begin{equation}
z_{k;1} = (d+\frac12)\frac{c^{2k+1}}{(1-c+c^2)^{k+1}}
= (d+\frac12)c^{2k+1} + (d+\frac12)(k+1) c^{2k+2} + O_k(c^{2k+3})
\end{equation}
We will show by induction on $k$ and $m$ that more generally, we have the
following analytical expansions in the neighborhood of $c=0$:
\begin{equation}
 \forall k\geq0, m\geq1\qquad z_{k;m} = \alpha_{k,m} c^{2k+1} + \beta_{k,m} c^{2k+2} + O_{k,m}(c^{2k+3})
\end{equation}
for coefficients that we will determine. We could use Proposition
\ref{prop:develu}
for this, but we will prove it directly.
We note that in Proposition \ref{prop:develu}, for $m\geq2$ the
multiplicity of $c$ in $w_{k;m}$ was $2k+2$, whereas here we will see that
the order of $c$ in $z_{k;m}$ is $2k+1$. We have already shown
that $z_{k;m}$ vanishes at least at order $1$ at $c=0$ for $k=0$ and all
the $m$. And by the explicit formula for $z_{k;1}$ we know that the latter
vanishes at order $2k+1$ at $c=0$.

Let us assume true for $k'<k$ and all $m'$, and for $k'=k$ and $m'<m$ that
$z_{k',m'}$ is zero at least at the order $2k'+1$ at the origin. According to
\eqref{eq:recurzmk}
\begin{equation}
(1 - c^m + c^{m+1}) z_{k;m}=\sum_{j=1}^{m-1}\binom{m}{j}c^{m-j}c^{j+1}\gamma_j'z_{k;m-j}
+ \sum_{j=0}^{m-1}\binom{m}{j}c^{m-1-j}c^{2}z_{k-1;m-j}(1 - (d+1)c)^j
\end{equation}
Recall that $d'= b- 1 -d$, hence $cd' = 1 - (d+1)c$. The second sum is by
recurrence $O(c^{2+2k-1})$. The first sum is, by recurrence, $c\times
O(c^{2k+1})$. We have therefore established at this stage that $z_{k;m}$
vanishes at $c=0$ at least to the order $2k+1$. Let us now look more precisely
for $\alpha_{k;m}$ and $\beta_{k;m}$. The factor $(1-c^m + c^{m+1})$ does not
contribute since $m\geq2$.
The first sum does not contribute to the term of order $c^{2k+1}$. For the second
sum, only its last term with $j=m-1$ contributes, and we obtain:
\begin{equation}
\alpha_{k;m} = m \alpha_{k-1;1} = m (d + \frac12)
\end{equation}
For the next term of order $c^{2k+2}$, we repeat the reasoning more precisely,
keeping the term with $j=m-1$ from the first sum and the terms in $j=m-1$ and
$j=m-2$ from the second sum. And we get:
\begin{align}
  \alpha_{k;m} + \beta_{k;m} c &=
  \begin{aligned}[t]
    &m c \frac1m \alpha_{k;1} + m (\alpha_{k-1;1} + \beta_{k-1;1} c)(1 -(m-1)(d+1)c)
\\
  & + \binom{m}2 c \alpha_{k-1;2} + O(c^2)
  \end{aligned}
\\
  \beta_{k;m}
  &\begin{aligned}[t]
    &= \alpha_{k;1} + m \beta_{k-1;1} - m(m-1)(d+1)\alpha_{k-1;1} + \binom{m}2 \alpha_{k-1;2}
\\
    &= (d+\frac12)\Bigl(1+mk - m(m-1)d\Bigr)
  \end{aligned}
\end{align}
These formulas for $\alpha_{k;m}$ and $\beta_{k;m}$ were established for
$m\geq 2$ (and $k\geq 1$). We can verify that they also work for $m =
1$. Let us combine our results into a proposition, expressed again using the
$v_{k;m}$:
\begin{prop}\label{prop:develv}
For $k=0$ and $m\geq1$:
\begin{equation}
v_{0;m} = \frac{b}{m+1} + \frac{(d+\frac12)m}{b} -
\frac{(d^2+d+\frac13)\frac{m(m-1)}2-d-\frac12}{b^{2}} + 0_m(b^{-3})
\end{equation}
For $k\geq1$ and $m\geq1$:
\begin{equation}
v_{k;m} = \frac{b}{m+1} +\frac{(d+\frac12) m}{b^{2k+1}} + 
\frac{(d+\frac12)\Bigl(1+mk - m(m-1)d\Bigr)}{b^{2k+2}} + O_{k,m}(b^{-2k-3})
\end{equation}
For $m=1$ we have the explicit formula $v_{k;1} = \frac{b}{2} +
(d+\frac12)\frac{c^{2k+1}}{(1-c+c^2)^{k+1}}$ valid for all $k\geq0$.
\end{prop}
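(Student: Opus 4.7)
The plan is to mirror for the $v_{k;m}$ (equivalently, for $z_{k;m} = v_{k;m} - b/(m+1)$) what the preceding discussion did for the $w_{k;m}$: rewrite the recurrences \eqref{eq:recurrv} and \eqref{eq:recurrv0} in the variable $c = b^{-1}$, establish by induction that $z_{k;m}$ has a zero of order at least $2k+1$ at $c = 0$, and then read off the first two nontrivial Taylor coefficients $\alpha_{k;m}$ and $\beta_{k;m}$. The case $m = 1$ is handled separately: since $u_{k;1} + v_{k;1} = b$ (a consequence of $\mu_k$ having total mass $b$), we get $z_{k;1} = w_{k;1}$, which the previous section has already expressed in closed form. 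This supplies both the base of the induction and an independent consistency check of the formulas at the end.

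For $k = 0$ the recurrence \eqref{eq:recurrv0} differs from the $k \geq 1$ case by the inhomogeneous term $b^{m+1}$. Subtracting $(b^{m+1} - b + 1) \cdot b/(m+1)$ and using the identity \eqref{eq:sommebinom}, this inhomogeneity collapses to
\begin{equation*}
\frac{(m+1)c - (1-dc)^{m+1} + (1-(d+1)c)^{m+1}}{(m+1)c},
\end{equation*}
whose binomial expansion I would compute to order $c^2$. Combined with the $j = m-1$ term of the sum (the only one contributing at orders $c$ and $c^2$, since $c^{j+1}\gamma_j'$ has constant term $1/(j+1)$ and the terms $j \leq m-2$ involve $z_{0;m-j} = O(c)$ multiplied by $c^{m-j} = O(c^2)$), this yields the announced $\alpha_{0;m} = (2d+1)m/2$ and $\beta_{0;m} = d + \tfrac12 - (3d^2+3d+1)m(m-1)/6$.

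For $k \geq 1$ the plan is double induction on $k$ and $m$, using \eqref{eq:recurzmk} with $cd' = 1 - (d+1)c$. By the inductive hypothesis the first sum contributes $c \cdot O(c^{2k+1}) = O(c^{2k+2})$ and the second sum contributes $c^2 \cdot O(c^{2k-1}) = O(c^{2k+1})$, which already gives vanishing to order $2k+1$. The leading coefficient is determined solely by the $j = m-1$ term of the second sum, giving $\alpha_{k;m} = m\,\alpha_{k-1;1} = m(d + \tfrac12)$. Extracting $\beta_{k;m}$ requires retaining three pieces: the $j = m-1$ term of the first sum (contributing $\alpha_{k;1}$), the $j = m-1$ term of the second sum expanded one further order in $c$ via $(1-(d+1)c)^{m-1}$ (contributing $m\beta_{k-1;1} - m(m-1)(d+1)\alpha_{k-1;1}$), and the $j = m-2$ term of the second sum (contributing $\binom{m}{2}\alpha_{k-1;2}$). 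Substituting the known values $\alpha_{k-1;1} = d + \tfrac12$, $\alpha_{k-1;2} = 2(d+\tfrac12)$, $\alpha_{k;1} = d + \tfrac12$, $\beta_{k-1;1} = (k-1+1)(d+\tfrac12) = k(d+\tfrac12)$ should collapse the expression to $(d+\tfrac12)(1 + mk - m(m-1)d)$.

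The main obstacle is the $c^{2k+2}$ bookkeeping: three distinct contributions must be simultaneously tracked, with the factor $(1-(d+1)c)^{j}$ expanded to linear order exactly when $j = m-1$, and the arithmetic miraculously simplifies only after the $-m(m-1)(d+1)$ term from expansion of $(cd')^{m-1}$ combines with the $\binom{m}{2}\cdot 2(d + \tfrac12)$ from the $j = m-2$ term. I would verify by direct substitution that the resulting formulas agree with the explicit $m = 1$ values from $z_{k;1} = w_{k;1}$, which provides a clean sanity check before declaring the induction complete.
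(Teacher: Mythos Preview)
Your proposal is correct and follows essentially the same route as the paper's own argument: the paper likewise passes to $z_{k;m}=v_{k;m}-b/(m+1)$, rewrites the recurrences in $c=b^{-1}$ using $cd'=1-(d+1)c$, handles $m=1$ via $z_{k;1}=w_{k;1}$, treats $k=0$ with the same inhomogeneous term you isolate, and for $k\geq1$ runs a double induction extracting $\alpha_{k;m}$ from the $j=m-1$ term of the second sum and $\beta_{k;m}$ from exactly the three contributions you list (with the same final simplification). The only ingredient present in the paper that you omit is the remark that the $z_{k;m}$ are rational in $c$ with poles outside $D(0,\rho)$, but this is not needed for the proposition as stated.
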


\section{Digamma}

In this section, we extend our previous work \cite{burnoldigamma} from 
$k=0$ to $k>0$. Let
$\psi(x)=\frac{\mathrm{d}}{\dx}\log\Gamma(x)$ be the digamma function. It
satisfies the functional equation \cite[1.7.1 (8)]{erdelyiI}:
\begin{equation}\label{eq:psi}
\frac1x = \psi(x+1) - \psi(x)\;,
\end{equation}
therefore
\begin{equation}
I(b,d,k) = \int_{\Ifo{b^{-1}}{1}}\frac{\mu_k(\dx)}{x} = \underbrace{\int_{\Ifo{b^{-1}}1} \psi(x+1)\mu_k(\dx)}_{I_k} -
\underbrace{\int_{\Ifo{b^{-1}}1} \psi(x)\mu_k(\dx)}_{J_k}
\end{equation}
We assume $k\geq1$ and examine first the quantity $J_k$. By the
integration lemma \eqref{eq:lemint1}:
\begin{align} 
J_k &= \int_{\Ifo{0}{1}} \Un_{\Ifo{b^{-1}}{1}}(x)\psi(x)\mu_k(\dx)\\
&=\int_{\Ifo01}\frac1b \sum_{\substack{0<a<b\\ a\neq d}} \psi(\frac{a+x}b)\mu_k(\dx) 
+ \delta_{d>0}\int_{\Ifo01}\frac1b \psi(\frac{d+x}b)\mu_{k-1}(\dx)
\end{align}
There is no contribution of $a=0$ because of the presence of the function
$\Un_{\Ifo{b^{-1}}{1}}()$ which is zero at $\frac{a+x}b$, $0\leq x < 1$ if
$a=0$ and equals $1$ otherwise. We have the relation (see eq.(1.7.1.(12)) and
eq.(6)):
\begin{equation}
\frac1b\sum_{1\leq a \leq b} \psi(\frac{a+x}b) = \psi(x+1) - \log(b)
\end{equation}
and thus
\begin{equation}
\frac1b\sum_{\substack{0<a<b\\a\neq d}} \psi(\frac{a+x}b)
= \psi(x+1) - \log(b) - \frac1b\psi(\frac{b+x}b) - \delta_{d>0}\frac1b\psi(\frac{d+x}b)
\end{equation}
therefore, given $\mu_k(\Ifo{0}{1}) = b$:
\begin{equation}\label{eq:J} 
J_k = \int_{\Ifo{0}{1}} \psi(x+1)\mu(\dx) -b \log(b) 
\begin{aligned}[t] 
& - \frac1b\int_{\Ifo{0}{1}}\psi(\frac{b+x}b)\mu_k(\dx)\\ 
& - \delta_{d>0} \frac1b\int_{\Ifo{0}{1}}\psi(\frac{d+x}b)\mu_k(\dx)\\ 
& + \delta_{d>0} \frac1b\int_{\Ifo{0}{1}}\psi(\frac{d+x}b)\mu_{k-1}(\dx) 
\end{aligned}
\end{equation}
Moreover
\begin{equation} 
I_k 
= \int_{\Ifo{0}{1}} \psi(x+1)\mu_k(\dx)-\int_{\Ifo{0}{b^{-1}}} \psi(x+1)\mu_k(\dx)
\end{equation}
and distinguishing the cases $d=0$ and $d>0$:
\begin{equation} 
\int_{\Ifo{0}{b^{-1}}} \psi(x+1)\mu_k(\dx)=\int_{\Ifo{0}{1}}\frac1b\psi(\frac{x}{b}+1) 
\begin{cases} 
\mu_k(\dx)& (d> 0)
\\ 
\mu_{k-1}(\dx)& (d=0) 
\end{cases}
\end{equation}
So we get
\begin{equation} 
\label{eq:I} 
I_k = \int_{\Ifo{0}{1}} \psi(x+1)\mu_k(\dx)-\frac1b\int_{\Ifo{0}{1}} \psi(\frac {b+x}b) 
\begin{cases} 
\mu_k(\dx)& (d> 0)
\\ 
\mu_{k-1}(\dx)& (d=0) 
\end{cases}
\end{equation}
By combining \eqref{eq:I} and \eqref{eq:J} it comes:
\begin{equation}\label{eq:preIk} 
I_k - J_k = b \log(b) 
\begin{aligned}[t] 
& +\delta_{d=0} \frac1b\int_{\Ifo{0}{1}}\psi(\frac{b+x}b)\mu_k(\dx)\\ 
& -\delta_{d=0} \frac1b\int_{\Ifo{0}{1}}\psi(\frac{b+x}b)\mu_{k-1}(\dx)\\ 
& +\delta_{d>0} \frac1b\int_{\Ifo{0}{1}}\psi(\frac{d+x}b)\mu_k(\dx)\\ 
& -\delta_{d>0} \frac1b\int_{\Ifo{0}{1}}\psi(\frac{d+x}b)\mu_{k-1}(\dx)
\end{aligned}
\end{equation}
\begin{prop}\label{prop:Ibdk_digamma}
Let $d_1 = d$ if $d>0$ and $d_1 = b$ if $d=0$. Then (keeping in mind that
the $\mu_k$ depend on $b$ and $d$), for $k\geq1$:
\begin{equation}\label{eq:Ik}
I(b,d,k) = b \log(b) + \frac1b\int_{\Ifo{0}{1}}\psi(\frac{d_1+x}b)(\mu_k-\mu_{k-1})(\dx)
\end{equation}
and for $k=0$:
\begin{equation}\label{eq:I0}
I(b,d,0) = b\log(b) - \psi(1) + \frac1b\int_{\Ifo{0}{1}}\psi(\frac{d_1+x}b)\mu_0(\dx)
\end{equation}
\end{prop}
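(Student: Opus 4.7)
For $k\geq 1$ the statement is essentially a notational repackaging of the already derived identity \eqref{eq:preIk}. The plan is simply to observe that in \eqref{eq:preIk} exactly two of the four delta-indicator terms survive: when $d=0$ the two surviving terms both have argument $\psi(\tfrac{b+x}b)$ and differ only in whether they are integrated against $\mu_k$ or $\mu_{k-1}$; when $d>0$ the two surviving terms both have argument $\psi(\tfrac{d+x}b)$ with the same contrast. Introducing $d_1=d$ in the second case and $d_1=b$ in the first collapses both cases into the single expression $\tfrac1b\int\psi(\tfrac{d_1+x}b)(\mu_k-\mu_{k-1})(\dx)$, and since $I(b,d,k)=I_k-J_k$, this gives \eqref{eq:Ik}.

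For $k=0$, the preceding derivation of \eqref{eq:preIk} cannot be invoked verbatim because it relied on the lemma \eqref{eq:lemint1}, which is only valid for $k\geq 1$. I would therefore redo the same steps using \eqref{eq:lemint0} in its place. The only structural difference between \eqref{eq:lemint0} and \eqref{eq:lemint1} is the extra boundary term $g(0)$. I would first apply \eqref{eq:lemint0} to $g(x)=\Un_{\Ifo{b^{-1}}{1}}(x)\psi(x)$ to compute $J_0$: the boundary term $g(0)$ vanishes since the indicator kills it, and among the remaining terms I drop the $a=0$ summand (for which $\frac{a+x}{b}<b^{-1}$ makes the indicator zero), then apply the same identity $\tfrac1b\sum_{1\leq a\leq b}\psi(\tfrac{a+x}b)=\psi(x+1)-\log b$ used previously, subtracting off the $a=b$ and (if $d>0$) $a=d$ terms. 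For $I_0$, I would write $I_0=\int_{\Ifo 01}\psi(x+1)\mu_0(\dx)-\int_{\Ifo{0}{b^{-1}}}\psi(x+1)\mu_0(\dx)$, and evaluate the second integral by applying \eqref{eq:lemint0} to $g(x)=\Un_{\Ifo{0}{b^{-1}}}(x)\psi(x+1)$. Here the $g(0)$ term equals $\psi(1)$, which is finite (unlike in the computation of $J_0$), and only the $a=0$ summand on the right-hand side can be nonzero, contributing only when $d>0$ since the sum excludes $a=d$.

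Combining the expressions for $I_0$ and $J_0$, the integrals $\int\psi(x+1)\mu_0(\dx)$ cancel, the $-b\log b$ from the identity becomes $+b\log b$ after the sign flip, and either the $\psi(\tfrac{b+x}b)$ contributions cancel (case $d=0$) or the $\psi(\tfrac{d+x}b)$ term from $J_0$ survives with the correct sign (case $d>0$). In both cases the remaining boundary term is precisely $-\psi(1)$, yielding \eqref{eq:I0} under the $d_1$ convention.

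The main obstacle is purely bookkeeping: keeping straight which terms survive in each of the cases $d=0$ versus $d>0$, since the two cases interact differently with the boundary term $g(0)$ in \eqref{eq:lemint0} and with the exclusion of $a=d$ from the summation. Once one case is done carefully and checked against the $d_1$ notation, the other follows by the same template.
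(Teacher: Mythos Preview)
Your treatment of $k\geq 1$ is exactly the paper's: the proof there reads literally ``the case $k\geq1$ is \eqref{eq:preIk}''. For $k=0$ the paper does not prove anything but cites \cite[Thm.~1]{burnoldigamma}; you instead rederive the formula from scratch by mimicking the $k\geq1$ computation with \eqref{eq:lemint0} in place of \eqref{eq:lemint1}. Your plan is correct and yields a self-contained argument, which is a modest gain over the citation.

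One bookkeeping slip in your final paragraph: you have the two cases reversed. When $d>0$, both $I_0$ and $J_0$ carry a $-\tfrac1b\int\psi(\tfrac{b+x}b)\mu_0$ term (from the $a=0$ summand in the $I_0$ computation and from subtracting the $a=b$ term in $J_0$), and these cancel in $I_0-J_0$, leaving the $\psi(\tfrac{d+x}b)$ term from $J_0$. When $d=0$, the $a=0$ summand is excluded from \eqref{eq:lemint0}, so $I_0$ has \emph{no} $\psi(\tfrac{b+x}b)$ term; the one in $J_0$ therefore survives with flipped sign and becomes the desired $+\tfrac1b\int\psi(\tfrac{d_1+x}b)\mu_0$ with $d_1=b$. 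You yourself flagged this kind of case-tracking as the main hazard, and indeed it bit you here, but only at the level of the prose summary; the underlying mechanism you describe is right.
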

\begin{proof}
The case $k\geq1$ is \eqref{eq:preIk} and the case $k=0$ is \cite[Thm. 1]{burnoldigamma}.
\end{proof}
In this paper, we are primarily interested in the asymptotic expansion for
$b\to\infty$, with $d$ fixed. Therefore, the formulas above are significantly
more advantageous in the case $d=0$ because the argument of the digamma
function $\psi(t)$ is found in the neighborhood of $t=1$, whereas for a fixed
$d>0$, it is found in the neighborhood of the pole at $t=0$. We therefore
reformulate the proposition for $d>0$ in a directly equivalent but potentially
more useful form:
\begin{prop}\label{prop:digammad+}
Let $d>0$. For $k\geq1$: 
\begin{equation}\label{eq:Ikd+} 
I(b,d,k) = b \log(b) 
\begin{aligned}[t] 
&+ \int_{\Ifo{0}{1}}\frac{(\mu_{k-1}-\mu_{k})(\dx)}{d + x} 
\\ 
&+ \frac1b\int_{\Ifo{0}{1}}\Bigl(\psi(1+\frac{d+x}b)-\psi(1+\frac db)\Bigr)(\mu_{k}-\mu_{k-1})(\dx) 
\end{aligned} 
\end{equation}
For $k=0$ (and with $\mu_\infty(\dx)=b\dx$):
\begin{equation}\label{eq:I0d+} 
I(b,d,0) = b\log(b) 
\begin{aligned}[t] 
&- b \log(1+\frac1d) + \psi(1+\frac db)-\psi(1)
\\ 
&+ \int_{\Ifo{0}{1}}\frac{(\mu_\infty- \mu_0)(\dx)} {d+x}
\\ 
&+\frac1b\int_{\Ifo{0}{1}}\Bigl(\psi(1+\frac{d+x}b)-\psi(1+\frac db)\Bigr)\mu_0(\dx) 
\end{aligned}
\end{equation}
Recall here that the
measures $\mu_k$, $k\geq0$, depend on $b$ and $d$.
\end{prop}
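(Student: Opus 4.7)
The plan is to derive both identities from Proposition~\ref{prop:Ibdk_digamma} by a purely algebraic rearrangement resting on two facts only: the functional equation $\psi(1+t)-\psi(t)=1/t$ from \eqref{eq:psi}, and the total-mass identity $\mu_k(\Ifo{0}{1})=b$ for every $k$ (so that $\mu_k-\mu_{k-1}$ has total mass zero, and, once $\mu_\infty=b\dx$ is introduced, $\mu_\infty-\mu_0$ as well).

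For $k\geq1$, I would apply the functional equation inside \eqref{eq:Ik} with $t=(d+x)/b$, writing $\psi((d+x)/b)=\psi(1+(d+x)/b)-b/(d+x)$. After division by $b$, the $-b/(d+x)$ piece produces exactly $\int(\mu_{k-1}-\mu_k)(\dx)/(d+x)$, which is the first new integral of \eqref{eq:Ikd+}. In the remaining piece $\frac1b\int\psi(1+(d+x)/b)(\mu_k-\mu_{k-1})(\dx)$, I would subtract the constant $\psi(1+d/b)$ from the integrand: this changes nothing because $\mu_k-\mu_{k-1}$ annihilates constants, and it produces exactly the second integral in \eqref{eq:Ikd+}.

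For $k=0$, the same substitution applied to \eqref{eq:I0} gives the two terms $\frac1b\int\psi(1+(d+x)/b)\mu_0(\dx)-\int\mu_0(\dx)/(d+x)$. Since $\mu_0$ is no longer a difference, pulling the constant $\psi(1+d/b)$ out of the first integral is not free: using $\mu_0(\Ifo{0}{1})=b$ it contributes the explicit term $\psi(1+d/b)$ of \eqref{eq:I0d+} and leaves the desired $\frac1b\int(\psi(1+(d+x)/b)-\psi(1+d/b))\mu_0(\dx)$. For the second term I would introduce $\mu_\infty=b\dx$ by writing $-\int\mu_0(\dx)/(d+x)=-\int\mu_\infty(\dx)/(d+x)+\int(\mu_\infty-\mu_0)(\dx)/(d+x)$; the elementary evaluation $\int_{\Ifo{0}{1}}b\dx/(d+x)=b\log(1+1/d)$ then turns the first piece into the announced $-b\log(1+1/d)$, and combining everything with the $-\psi(1)$ already present in \eqref{eq:I0} reproduces \eqref{eq:I0d+}.

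No step is genuinely difficult: the proposition is, as announced in the paragraph preceding the statement, an algebraic reformulation of Proposition~\ref{prop:Ibdk_digamma} designed to separate the behavior of $\psi$ near its pole at $0$---now made explicit through the integrals against $1/(d+x)$ and through the term $-b\log(1+1/d)$---from its smooth behavior near $1$, captured by the bounded difference $\psi(1+(d+x)/b)-\psi(1+d/b)$, which is uniformly $O(b^{-1})$ for $x\in\Ifo{0}{1}$ and will be amenable to Taylor expansion around $\psi(1+d/b)$. The only care required is bookkeeping of which signed measure makes which constant integrate to zero, which the identities $\mu_k(\Ifo{0}{1})=b$ and $\mu_\infty=b\dx$ render straightforward.
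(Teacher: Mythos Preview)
Your proposal is correct and takes essentially the same approach as the paper: the paper's own proof is the one-line remark that the proposition is a ``direct consequence of the functional equation $\psi(x)=-x^{-1}+\psi(x+1)$ and the fact that $\mu_k$ has total mass equal to $b$ independent of $k$,'' and your write-up simply spells out those manipulations in detail (including the introduction of $\mu_\infty$ to absorb the $-b\log(1+1/d)$ term, which the paper also mentions).
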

\begin{proof}
  Direct consequence of the functional equation $\psi(x) = -x^{-1}+\psi(x+1)$
  \cite[1.7.1 (8)]{erdelyiI} and the fact that $\mu_k$ has a total mass equal
  to $b$ independent of $k$. We can also use only $\psi(1)$, but the form
  chosen with $\psi(1+\frac db)$ is more convenient for the following
  sections; this is also the reason for the appearance of $\mu_\infty$.
\end{proof}

\section{\texorpdfstring{$d=0$ and $k=0$}{d=0 and k=0}}

In this section we assume $d=0$ and $k=0$. The formula \eqref{eq:I0} is
therefore with $d_1 = b$ and gives:
\begin{equation}
I(b,0,0) = b\log(b) + \frac1b\int_{\Ifo{0}{1}}\Bigl(\psi(1+\frac{x}b)-\psi(1)\Bigr)\mu_0(\dx)
\end{equation}
The following Taylor series expansion (\cite[1.17 (5)]{erdelyiI}) converges normally for
$|u-1|\leq1-\eta$, $\eta>0$:
\begin{equation}\label{eq:psiseries}
\psi(u) - \psi(1) = \sum_{m=1}^\infty (-1)^{m-1}\zeta(m+1)(u-1)^{m}
\end{equation}
Therefore with $u=1+\frac xb$, $0\leq x <1$, and recalling
$u_{0;m}=\int_{\Ifo{0}{1}} x^m\mu_0(\dx)$:
\begin{equation}
K(b,0) = b\log(b) +\sum_{m=1}^\infty (-1)^{m-1}\frac{\zeta(m+1)u_{0;m}}{b^{m+1}}
\end{equation}
Using the trivial upper bounds $0\leq u_{0;m}\leq b/(m+1)$ and $1\leq
\zeta(m+1)\leq \zeta(2)$ and the fact that $u_{0;m}$ is a rational
function of $c=b^{-1}$ with a simple pole at the origin, we see that, up
to $O(b^{-M})$, for any $M$, the expression is given by a partial sum
which is a rational fraction in $b$, regular (and even zero) at the
origin as a function of $c=b^{-1}$. Hence the assertion:
\emph{$I(b,0,0)-b\log(b)$ admits an asymptotic expansion $a_1/b +
  a_2/b^2 + \dots$ to all orders in inverse powers of $b$ as
  $b\to\infty$}. To obtain some terms of this expansion, it is
convenient to write $u_{0;m} = \frac{b}{m+1}-w_{0;m}$:
\begin{equation}
\sum_{m=1}^\infty (-1)^{m-1}\frac{\zeta(m+1)u_{0;m}}{b^{m+1}} =
\sum_{m=1}^\infty (-1)^{m-1}\frac{\zeta(m+1)}{(m+1)b^m}
- \sum_{m=1}^\infty (-1)^{m-1}\zeta(m+1)\frac{w_{0;m}}{b^{m+1}}
\end{equation}
The first sum is $-\int_0^1(\psi(1)-\psi(1+\frac xb))\dx =
b\log\Gamma(1+\frac 1b)-\psi(1)$. In the second sum, we know exactly the
first term $w_{0;1} = \frac c2(1 - c+ c^2)^{-1} = \frac{b}{2(b^2-b+1)}$.
We can also give an exact formula for $w_{0;2}$, but it becomes
complicated. In any case, we know from Proposition \ref{prop:develu}
that it equals $\frac56 b^{-2}+O(b^{-4})$. All subsequent $w_{0;m}$ are
$O(b^{-2})$, so the term in $m=2$ contributes a $\frac56 \zeta(3)
b^{-5}$ which cannot be altered subsequently. This gives us the
following theorem:
\begin{prop}\label{prop:develKb0}
The deviation from $b\log(b)$ of the Kempner harmonic sum $K(b,0)=\sum'
\frac1m$ where the integers do not have the digit $0$ in their base
$b$ representation admits an asymptotic expansion in inverse powers of $b$ to all
orders. We have:
\begin{equation*} 
K(b,0) 
\begin{aligned}[t] 
&= b\log(b) + b\log\Gamma(1+ \frac 1b) - \psi(1) 
- \frac{\zeta(2)}{2b(b^2-b+1)} + \frac{5\zeta(3)}{6b^5} + O(b^{-6})\\ 
&=b\log(b)+\frac{\zeta(2)}{2b}-\frac{\zeta(3)}{3b^2} - \frac{2\zeta(2)-\zeta(4)}{4b^3} 
- \frac{5\zeta(2)/2+\zeta(5)}{5b^4}+\frac{5\zeta(3)+\zeta(6)}{6b^5} + O(b^{-6})
\end{aligned}
\end{equation*}
\end{prop}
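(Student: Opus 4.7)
The plan starts from \eqref{eq:I0} specialized to $d=0$, which reads
\[
I(b,0,0) = b\log(b) + \frac{1}{b}\int_{\Ifo{0}{1}}\bigl(\psi(1+\tfrac{x}{b})-\psi(1)\bigr)\mu_0(\dx).
\]
First I would insert the Taylor series \eqref{eq:psiseries} for $\psi(1+x/b)-\psi(1)$, which converges normally on $x/b\in\Ifo{0}{1/b}$ for $b\geq2$, swap sum and integral, and recognize the moments $u_{0;m}$ to obtain
\[
I(b,0,0) = b\log(b)+\sum_{m=1}^\infty(-1)^{m-1}\frac{\zeta(m+1)\,u_{0;m}}{b^{m+1}}.
\]
The crude bounds $0\leq u_{0;m}\leq b/(m+1)$ and $1\leq\zeta(m+1)\leq\zeta(2)$ show that the tail from index $M$ on is uniformly $O(b^{-M})$, so truncating at any $M$ already yields an asymptotic expansion to that order. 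This proves the existence assertion.

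The next step is to write $u_{0;m}=\tfrac{b}{m+1}-w_{0;m}$ and split into two sums. The first sum $\sum_{m\geq1}(-1)^{m-1}\zeta(m+1)/((m+1)b^m)$ coincides, after reindexing, with $b$ times the Taylor series $\log\Gamma(1+t)=-\gamma t+\sum_{n\geq2}(-1)^n\zeta(n)t^n/n$ evaluated at $t=1/b$, minus $b\cdot(-\gamma/b)=\psi(1)$; this identifies it as $b\log\Gamma(1+\tfrac{1}{b})-\psi(1)$ exactly. For the second sum $-\sum_{m\geq1}(-1)^{m-1}\zeta(m+1)w_{0;m}/b^{m+1}$, the $m=1$ term is computed in closed form from the explicit $w_{0;1}=b/(2(b^2-b+1))$, giving $-\zeta(2)/(2b(b^2-b+1))$. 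By Proposition \ref{prop:develu}, $w_{0;m}=O(b^{-2})$ for $m\geq 2$, so the $m$-th remaining contribution is $O(b^{-m-3})$; through order $b^{-5}$ only $m=2$ matters, and the leading coefficient $w_{0;2}=(5/6)b^{-2}+O(b^{-4})$ from Proposition \ref{prop:develu} (at $d=0$) produces the explicit term $+5\zeta(3)/(6b^5)$. This establishes the first, compact form of the proposition.

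The expanded second form is then routine algebra. I would expand $b\log\Gamma(1+1/b)-\psi(1)$ with coefficient $(-1)^{k-1}\zeta(k+1)/(k+1)$ at $b^{-k}$, and expand $1/(b(b^2-b+1))$ using the factorization $b^2-b+1=(b^3+1)/(b+1)$, which immediately yields $1/(b(b^2-b+1))=b^{-3}+b^{-4}+0\cdot b^{-5}-b^{-6}-b^{-7}+\cdots$. Collecting coefficients through $b^{-5}$ gives the four displayed terms with $O(b^{-6})$ remainder. There is no genuine obstacle in this plan; the only delicate observation is that $1/(b(b^2-b+1))$ has no $b^{-5}$ contribution, so that the $b^{-5}$ coefficient comes cleanly from $\zeta(6)/6$ (via $\log\Gamma$) together with $5\zeta(3)/6$ (via $w_{0;2}$), reproducing the stated $(5\zeta(3)+\zeta(6))/6$.
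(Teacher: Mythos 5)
Your proposal is correct and follows essentially the same route as the paper: the digamma integral formula \eqref{eq:I0} with $d_1=b$, the Taylor series \eqref{eq:psiseries} to produce the moment series, the split $u_{0;m}=\frac{b}{m+1}-w_{0;m}$ identifying $b\log\Gamma(1+\frac1b)-\psi(1)$, the exact $m=1$ term from $w_{0;1}=\frac{b}{2(b^2-b+1)}$, and the $\frac56\zeta(3)b^{-5}$ contribution from $w_{0;2}$. The final re-expansion, including the observation that $1/(b(b^2-b+1))$ has no $b^{-5}$ term, matches the stated coefficients.
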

\begin{proof}
The first line was justified previously, and the second follows from it
via \eqref{eq:psiseries} (which can be integrated to obtain the
expansion of $\log\Gamma(1+x)$).
\end{proof}
Here is some numerical data. If we evaluate the second line at $b=10$,
we obtain $\approx\np{23.103447618168193}$ and (using \cite{baillie2008}
if we have access to \textsf{Mathematica\texttrademark} or the
\textsf{SageMath} code of the author \cite{burnolirwin}) we have
$K(10,0)\approx\np{23.103447909420542}$. For $b=1000$, the second line
gives the value $\approx \np{6907.7561010479319268743516533}$, while
using the author's code, we find $K(1000,0)\approx
\np{6907.7561010479319268744907724}$. In this case, the formula in the
first line actually gives an approximation roughly three times further
from $K(1000,0)$ and slightly larger instead of smaller.

\section{\texorpdfstring{$d=0$ and $k\geq1$}{d=0 and k>=1}}

Still $d=0$ but now with $k\geq1$. This time our starting point
\eqref{eq:Ik} is rewritten as:
\begin{align} 
I(b,0,k) &= b \log(b) - \frac1b\int_{\Ifo{0}{1}} 
\Bigl(\psi(1)-\psi(1 + \frac{x}b)\Bigr)(\mu_{k}-\mu_{k-1})(\dx)
\\
&=b \log(b) + \sum_{m=1}^\infty (-1)^{m-1}\frac{\zeta(m+1)(u_{k;m}-u_{k-1;m})}{b^{m+1}}
\\\label{eq:Ib0k}
&=b \log(b) + \sum_{m=1}^\infty (-1)^{m-1}\frac{\zeta(m+1)(w_{k-1;m}-w_{k;m})}{b^{m+1}}
\end{align}
Each term in absolute value is bounded above by $\zeta(2)(m+1)^{-1}b^{-m}$ as
$0\leq u_{k-1;m}<u_{k;m} < \frac{b}{m+1}$. Therefore, up to $O(b^{-M})$ for
any $M$, the difference $I(b,d,k)-b\log(b)$ is a rational fraction in $b$, and
considering Proposition \ref{prop:develu} and
$u_{k;m}-u_{k-1;m}=w_{k-1;m}-w_{k;m}$, this rational fraction is regular in
$c=b^{-1}=0$ and even vanishes to the order $(2(k-1)+1)+2=2k+1$, this order of
vanishing being determined by the first term $m=1$, and in this first term by the
contribution of $w_{k-1;m}$, all other contributions vanishing to higher order
in the variable $c$.

For the term with $m=2$, we know that $w_{k-1;2}$ is $O(b^{-2k})$, therefore,
in the final term, a contribution $O(b^{-2k-3})$, and we know the first two
terms. We could, of course, calculate more, but we will be satisfied with
these, and this means that we can determine the first four terms of the
expansion of $I(b,0,k)-b\log(b)$, which will be of the form $a_1 b^{-2k-1} +
a_2 b^{-2k-2} + a_3 b^{-2k-3} + a_4 b^{-2k-4}$. It is important to note that
the term with $m=3$ will contribute to $b^{-2k-4}$.

For $m=1$, we have, by Proposition \ref{prop:develu}:
\begin{equation}\label{eq:diffwk1}
w_{k-1;1}-w_{k;1} = \frac{c^{2k-1}}{2(1-c+c^2)^k}\left(1 - \frac{c^2}{1 - c+ c^2}\right)
= \frac{c^{2k-1}(1-c)}{2(1-c+c^2)^{k+1}}
\end{equation}
For $m=2$, we must distinguish between the cases $k=1$, $k=2$, and $k\geq3$.
Since we only consider the first two terms of $w_{k-1;2}$, the contribution of
$w_{k;2}$ is negligible.
\begin{align*} 
w_{0;2}-w_{1;2} &= \frac56 c^2 + O(c^4)\\ 
w_{1;2}- w_{2;2}&= \frac12 c^4 + \frac43 c^5 + O(c^6)\\ 
(k\geq 3)\quad w_{k-1;2} - w_{k;2} &= \frac12 c^{2k} +\frac{k}2 c^{2k+1} + O(c^{2k+2})
\end{align*}
We must multiply this by $-\zeta(3)c^3$. The term with $m=3$ contributes
$+\frac12\zeta(4)b^{-2k-4}$.
\begin{equation} 
I(b,0,k) = b\log(b) + \frac{\zeta(2)(1-c)c^{2k+1}}{2(1-c+c^2)^{k+1}}
-
\begin{matrix} 
\frac56 \zeta(3)c^5 & (k=1)
\\ 
\frac12 \zeta(3)c^7 + \frac43 \zeta(3) c^8 & (k=2)
\\ 
\frac12 \zeta(3) c^{2k+3} + \frac{1}2\zeta(3)kc^{2k+4} &(k\geq3)
\end{matrix}
+ \frac12\zeta(4) c^{2k+4}+O(c^{2k+5})
\end{equation}
We then obtain this next statement:
\begin{prop}\label{prop:develIb0k}
  Let $b>1$ and $k\geq1$.  The deviation from $b\log(b)$ of the Irwin harmonic
  sum $I(b,0,k)=\sum^{(k)} \frac1m$ where the integers have exactly $k$ digits
  equal to $0$ in their base $b$ representation admits an asymptotic expansion
  in inverse powers of $b$ to all orders. We must distinguish whether $k=1$,
  $k=2$, or $k\geq3$:
\begin{align}
I(b,0,1) &=b\log(b) + \frac{\zeta(2)}{2b^{3}} 
+ \frac{\zeta(2)}{2b^{4}} 
- \frac{3\zeta(2)+5\zeta(3)}{6b^{5}} 
- \frac{3\zeta(2)-\zeta(4)}{2b^{6}} 
+ O(b^{-7})
\\
I(b,0,2)&=b\log(b) + \frac{\zeta(2)}{2b^{5}} 
+ \frac{\zeta(2)}{b^{6}} 
- \frac{\zeta(3)}{2b^{7}} 
- \frac{15\zeta(2)+8\zeta(3)-3\zeta(4)}{6b^{8}} 
+ O(b^{-9})
\\ 
I(b,0,k)&=b\log(b) + \frac{\zeta(2)}{2b^{2k+1}} 
+ \frac{k\zeta(2)}{2b^{2k+2}} 
\begin{aligned}[t] 
&+ \frac{(k^2-k-2)\zeta(2)-2\zeta(3)}{4b^{2k+3}} 
\\ 
&+ \mskip-300mu\mathrlap{(k\geq3)}\mskip300mu
\frac{(k^3-3k^2-10k-6)\zeta(2)-6k\zeta(3)+6\zeta(4)}{12b^{2k+4}}
\\
&+ O(b^{-2k-5})
\end{aligned}
\end{align}
\end{prop}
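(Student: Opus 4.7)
The plan is to start from the series representation
\begin{equation*}
I(b,0,k) = b\log(b) + \sum_{m=1}^\infty (-1)^{m-1}\frac{\zeta(m+1)(w_{k-1;m}-w_{k;m})}{b^{m+1}}
\end{equation*}
already derived as \eqref{eq:Ib0k}, and simply to read off the coefficients of $c=b^{-1}$ through order $c^{2k+4}$. Proposition \ref{prop:develu} shows that for $m\geq 2$ the quantity $w_{k';m}$ vanishes at $c=0$ to order $2k'+2$, so the $m$-th term of the series has order $c^{m+1}\cdot c^{2(k-1)+2}=c^{m+2k+1}$; the tail $m\geq 4$ is therefore $O(c^{2k+5})$ and is absorbed into the error. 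Only $m=1,2,3$ need explicit treatment.

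For $m=1$ I would use the closed form $w_{k-1;1}-w_{k;1}=\frac{c^{2k-1}(1-c)}{2(1-c+c^2)^{k+1}}$ from \eqref{eq:diffwk1} directly. Expanding
\begin{equation*}
(1-c+c^2)^{-(k+1)}=1+(k+1)c+\tfrac{k(k+1)}{2}c^2+\tfrac{(k+1)(k-3)(k+2)}{6}c^3+O(c^4),
\end{equation*}
multiplying by $(1-c)$ and by the prefactor $\zeta(2)c^2$ arising from $1/b^{m+1}$, and collecting, one obtains contributions $\tfrac{\zeta(2)}{2},\ \tfrac{k\zeta(2)}{2},\ \tfrac{(k+1)(k-2)\zeta(2)}{4},\ \tfrac{(k+1)(k^2-4k-6)\zeta(2)}{12}$ at orders $c^{2k+1},c^{2k+2},c^{2k+3},c^{2k+4}$ respectively. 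Since $(k+1)(k-2)=k^2-k-2$ and $(k+1)(k^2-4k-6)=k^3-3k^2-10k-6$, this already accounts for all the $\zeta(2)$-content of the final expansions, uniformly in the range of $k$.

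For $m=2$ with $d=0$, Proposition \ref{prop:develu} gives the exceptional $w_{0;2}=\tfrac{5}{6}c^2+O(c^4)$ and $w_{1;2}=\tfrac12 c^4+\tfrac{4}{3}c^5+O(c^6)$, and the generic $w_{k';2}=\tfrac{1}{2}c^{2k'+2}+\tfrac{k'+1}{2}c^{2k'+3}+O(c^{2k'+4})$ for $k'\geq 2$, forcing the three-way case split at $k-1\in\{0,1\}$ versus $k-1\geq 2$. The $w_{k;2}$ piece is of strictly higher order and is negligible. Multiplication by the prefactor $-\zeta(3)c^3$ reproduces the three tabulated case-dependent expressions displayed in the text just before the proposition. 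For $m=3$ only the leading term of $w_{k-1;3}$ matters, and this equals $\tfrac12 c^{2k}$ uniformly: when $k-1=0$ the exceptional entry $w_{0;3}=\tfrac12 c^2+O(c^4)$ still has leading coefficient $\tfrac12$, and for $k-1\geq 1$ the generic formula gives the same leading $\tfrac12 c^{2(k-1)+2}$. Multiplication by the prefactor $+\zeta(4)c^4$ yields the uniform additive $\tfrac12\zeta(4)c^{2k+4}$.

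Adding these three contributions reproduces the consolidated identity already written just before the proposition, and collecting powers of $c$ in that rational-plus-polynomial expression gives the three asserted expansions. The main obstacle is not conceptual but a careful bookkeeping: one must verify that, although the exceptional entries of Proposition \ref{prop:develu} at small $(k,m)$ look delicate, they trigger no \emph{further} case distinctions at $m=3$ (because only the leading $c^{2k}$ coefficient is needed there, and it is the same in all cases) nor in the tail $m\geq 4$, whose negligibility reduces to the uniform bound $w_{k';m}=O(c^{2k'+2})$ from Proposition \ref{prop:develu}.
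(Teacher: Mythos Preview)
Your proposal is correct and follows essentially the same route as the paper: start from \eqref{eq:Ib0k}, treat $m=1$ via the closed form \eqref{eq:diffwk1}, treat $m=2$ via the case-split in Proposition~\ref{prop:develu} (which forces the $k=1$, $k=2$, $k\geq3$ trichotomy), and treat $m=3$ via the uniform leading term $\tfrac12 c^{2k}$ of $w_{k-1;3}$. One small point worth tightening: when you write ``the tail $m\geq4$ is therefore $O(c^{2k+5})$'', the per-term order of vanishing does not by itself bound an infinite sum; the paper handles this just before \eqref{eq:Ib0k} by first truncating via the numerical estimate $|u_{k;m}-u_{k-1;m}|<b/(m+1)$, reducing to a finite rational expression in $c$, and only then reading off orders of vanishing---you are implicitly relying on that step.
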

Here are some numerical results. Let $\delta_k(b)$ be equal to $I(b,0,k)$
minus the approximation given in the Proposition. We obtain:\newline
\[\begin{array}{c|c|c|c}
k&\delta_k(10)\cdot 10^{2k+4}&\delta_k(100)\cdot 100^{2k+4}&\delta_k(1000)\cdot 1000^{2k+4}
\\\hline
1&\np{-0.134}&\np{-0.019}&\np{-0.00195}
\\
2&\np{-0.497}&\np{-0.054}&\np{-0.00540}
\\
3&\np{-1.369}&\np{-0.137}&\np{-0.01365}
\\
4&\np{-2.446}&\np{-0.229}&\np{-0.02275}\\\hline
\end{array}
\]

\section{\texorpdfstring{$d>0$ and $k=0$}{d>0 and k=0}}

From now on, $d>0$. Let us first assume $k=0$. The starting point is the
equation \eqref{eq:I0d+}, which we write:
\begin{equation}
K(b,d) = b\log(b) - b\log(1+\frac1d) + \psi(1+\frac db)-\psi(1) + A + B
\end{equation}
with $A$ the term from the second line of \eqref{eq:I0d+} and $B$ the term
from the last line. We first examine the quantity $B$:
\begin{equation}
B = \frac1b\int_{\Ifo{0}{1}}\Bigl(\psi(1+\frac{d+x}b)-\psi(1+\frac
db)\Bigr)\mu_0(\dx)
\end{equation}
The function $x\mapsto\psi(1+\frac{d+x}b)-\psi(1+\frac db)$ is analytic at
$x=0$ and the Taylor series there has a radius of convergence $d+b$ (since the
nearest singularity is at $x=-d-b$), so it converges normally on $\Ifo01$ and,
after recalling that $\int_{\Ifo01} x^m\mu_0(\dx) = u_{0;m}$, we obtain:
\begin{equation}
B = \sum_{m=1}^\infty \frac{1}{m!}\psi^{(m)}(1+\frac{d}{b})\frac{u_{0;m}}{b^{m+1}}
\end{equation}
For all $m\geq1$ (\cite[1.16 (9)]{erdelyiI}):
\begin{equation} 
\frac{1}{m!}\psi^{(m)}(x) = (-1)^{m-1}\sum_{n=0}^\infty\frac{1}{(n+x)^{m+1}}
\end{equation}
and therefore (with $c=b^{-1}$):
\begin{equation}\label{eq:Bserie}
B = \sum_{m=1}^\infty (-1)^{m-1}\left(\sum_{n=1}^\infty \frac{1}{(bn + d)^{m+1}}\right)u_{0;m}
= \sum_{m=1}^\infty (-1)^{m-1}\left(\sum_{n=1}^\infty \frac{1}{(n + dc)^{m+1}}\right)(cu_{0;m})c^m
\end{equation}
It is important to note, when considering this expression from the perspective
of analytic functions, that we know $c u_{0;m}<(m+1)^{-1}$, but only for
$c=b^{-1}$, $b\in\NN$, $b\geq2$. Therefore, initially, we require that $c$
among these specific ``inverse of integers'' values.

Except for $b=2$ and $d=1$, in which case the $u_{0;m}$, $m\geq1$, are all
zero, the sequence $(u_{0;m})$ is strictly positive and strictly decreasing
with a limit of zero. It follows that the series expressing $B$ is always a
special alternating series, and the difference between $B$ and a partial sum
is less than the absolute value of the first ignored term. We have the
immediate upper bound $\sum_{n=1}^\infty \frac{1}{(bn +
  d)^{m+1}}<b^{-m-1}\zeta(m+1)$, and we know that $u_{0;m}<\frac{b}{m+1}$, so
the $m$th term is bounded above by $b^{-m}\zeta(m+1)/(m+1)$. Thus, up to
$O(b^{-M-1})$, for any $M$, we can approximate $B$ (for our $b\in\NN$,
$b\geq2$) by the partial sum with $M$ terms.

We know that $u_{0;m}$ is a rational function in $c$ having a simple pole at
$c=0$ and no other pole in the open disk $D(0,\rho)$, $\rho^2(1+\rho)=1$,
$\rho\approx\np{0.755}$. The expression $\sum_{n=1}^\infty 1/(n+cd)^{m+1}$ is
an analytic function of $c$ in the complex plane excluding
$\{-d^{-1},-2d^{-1},-3d^{-1},\dots\}$, and $b^{-1}$ is indeed located in the
open disk $D(0,d^{-1})$. Like $b\geq2$, $b^{-1}$ is also located in the disk
$D(0,\rho)$ on which we know that $cu_{0;m}$ has no poles. We therefore have,
for each term indexed by $m\leq M$, at least a Taylor series expansion to
within $O(c^{M+1})$ (or more), and therefore also for the partial sum
including the first $M$ terms of \eqref{eq:Bserie}. Thus, again for $b\in\NN$,
$b\geq2$:
\begin{equation}
B
\begin{aligned}[t]
&= \sum_{m=1}^M (-1)^{m-1}c^m(c u_{0;m})\sum_{n=1}^\infty \frac{1}{(n + cd)^{m+1}}
+ O(c^{M+1})
\\
&=\frac12\zeta(2) c + a_2 c^2 + \dots + a_{M} c^{M} + O(c^{M+1})
\end{aligned}
\end{equation}
Therefore, $B$ has, as a function of $b\in\NN$, $b\geq2$, an asymptotic
inverse power expansion to all orders with respect to $b$.

We can extract a part of $B$ by writing $u_{0;m}=\frac
b{m+1}-w_{0;m}$. Reversing the steps, this will correspond to the evaluation
of:
\begin{equation}
\int_0^1\Bigl(\psi(1+\frac{d+x}b)-\psi(1+\frac db)\Bigr)\dx
= b \log\frac{\Gamma(1+\frac{d+1}b)}{\Gamma(1+\frac db)} - \psi(1+\frac db)
\end{equation}
and therefore:
\begin{equation}
B = b \log\frac{\Gamma(1+\frac{d+1}b)}{\Gamma(1+\frac db)} - \psi(1+\frac db)
- \sum_{m=1}^\infty (-1)^{m-1}c^{m+1}w_{0;m}\sum_{n=1}^\infty \frac{1}{(n + cd)^{m+1}}
\end{equation}
We know from Proposition \ref{prop:develu} that each $w_{0;m}$ for $m\geq2$ is
$O(c^2)$ and we have the exact formula $w_{0;1} =
(d+\frac12)c/(1-c+c^2)$. Therefore, among the terms with $m\geq2$, only the
one with $m=2$ will contribute to $c^5$. Thus, with regard to the term with
$m=1$, we will only need $\sum(n+cd)^{-2}$ to order $O(c^3)$ to determine $B$
to within $O(c^6)$. We obtain the following formula:
\begin{equation}
B = b \log\frac{\Gamma(1+\frac{d+1}b)}{\Gamma(1+\frac db)} - \psi(1+\frac db)
\begin{aligned}[t]
&- (d+\frac12)\frac{c^3}{1-c+c^2}\bigl(\zeta(2)-2d\zeta(3)c+3d^2\zeta(4)c^2\bigr)
\\
&+ (d^2+2d+\frac56)\zeta(3)c^5 + O(c^6)
\end{aligned}
\end{equation}

Now let us consider the quantity $A =
\int_{\Ifo{0}{1}}\frac{(\mu_{\infty}-\mu_0)(\dx)}{d+x}$ extracted from the
equation \eqref{eq:I0d+} (where $\mu_{\infty}(\dx) = b\dx$). To analyze it
from the perspective of letting $b$ go to$ \infty$, we will use the equation
\eqref{eq:U0rec} for $U_0(z)=\int_{\Ifo{0}{1}}(z+x)^{-1}\mu_0(\dx)$:
\begin{equation}
U_0(z) = \frac1z + \sum_{\substack{0\leq a <b\\a\neq d}}U_0(bz+a)
\end{equation}
Let us define
\begin{equation}
U_\infty(z) = \int_{\Ifo{0}{1}}\frac{b\dx}{z+x} = b\log(1+\frac1z)
\end{equation}
This function satisfies the following relation (which is the limit for
$k\to\infty$ of \eqref{eq:Ukrec}):
\begin{equation} 
U_\infty(z) = \sum_{0\leq a <b} U_\infty(bz+a)
\end{equation}
So
\begin{align} 
A &= -\frac1d + U_\infty(bd+d)
+ \sum_{\substack{0\leq a <b\\a\neq d}}(U_{\infty}(bd+a)-U_0(bd+a))
\\
&=-\frac1d + b \log(1 + \frac1{bd+d})
+ \sum_{\substack{0\leq a <b\\a\neq d}}\int_{\Ifo{0}{1}}\frac{(\mu_{\infty}-\mu_0)(\dx)}{bd+a+x}
\\
&=-\frac1d + b \log(1 + \frac1{bd+d})
+ \sum_{\substack{0\leq a <b\\a\neq d}} 
\sum_{m=0}^\infty (-1)^{m}\frac{\mu_{\infty}(x^m )-\mu_0(x^m)}{(bd+a)^{m+1}}
\\
&=-\frac1d + b \log(1 + \frac1{bd+d})
- \sum_{m=1}^\infty(-1)^{m-1} w_{0;m}\sum_{\substack{0\leq a <b\\a\neq d}}\frac1{(bd+a)^{m+1}}
\\
&=-\frac1d + c^{-1}\log(1+\frac{c}{d(1+c)})
- \sum_{m=1}^\infty(-1)^{m-1} c^{m}w_{0;m}\sum_{\substack{0\leq a <b\\a\neq d}}\frac c{(d+ca)^{m+1}}
\end{align}
We have the rough upper bound $\sum_{\substack{0\leq a <b\\a\neq
    d}}\frac1{(bd+a)^{m+1}}\leq b(bd)^{-m-1} = d^{-m-1}b^{-m}$ and we know
$0\leq w_{0;m}\leq b/(m+1)$. Therefore, if we keep $M$ terms of the series,
the remainder is bounded in absolute value by $\sum_{m=M+1}^\infty
d^{-m-1}b^{1-m}=d^{-M-2}b^{-M}/(1-1/(bd))\leq 2d^{-M-2}b^{-M}$. Consequently,
we can, up to $O(b^{-M})$, for each given $M$, simply look at what happens
with the partial sum with $M$ terms. And so it suffices to examine each
expression:
\begin{equation}
s_m(b,d) =
\sum_{\substack{0\leq a <b\\a\neq d}}\frac c{(d+ca)^{m+1}} = s_m^*(b,d) - c(d+cd)^{-m-1}
\end{equation}
since we already know that the $c^m w_{0;m}$ are analytic (and vanish at the
origin at order $2$ for $m=1$, at order $m+2$ for $m\geq2$). It is well known
that the Riemann sum
\begin{equation}
  s_m^*(b,d) =
\frac1b \sum_{0\leq a <b}\frac 1{(d+\frac ab)^{m+1}}\approx \int_{d}^{d+1}\frac{dx}{x^{m+1}}
\end{equation}
admits, by the Euler-MacLaurin method  (\cite[\S3.6]{debruijn1981},
\cite[\S8.1]{olver1997}), an asymptotic
expansion to all orders in inverse powers of $b$: in general, for any function
$f$ of class $C^\infty$ on $\Iff01$, we have the asymptotic expansion for
$b\to\infty$:
\begin{equation} 
\frac1b\sum_{a=0}^{b-1}f(\frac ab) =_{\text{asympt.}} \int_0^1f(x)\dx 
- \frac{f(1)-f(0)}{2b} 
+\sum_{i\geq1}\frac{B_{2i}}{(2i)! b^{2i}}\bigl(f^{(2i-1)}(1)- f^{(2i-1)}(0)\bigr)
\end{equation}
which can be applied here to the functions $f(x)=(d+x)^{-m-1}$. Therefore,
there also exists a series expansion for $s_m(b,d)$, and then for each partial
sum of the series in $A$, and we conclude that $A$, as a function of
$b\in\NN$, $b\geq2$, admits an asymptotic expansion at all orders in inverse
powers of $b$.

Later, we will need the expansion of $s_1(b,d)$ up to term $c^3$, here it is:
\begin{equation}\label{eq:s1}
  s_1(b,d)
\begin{aligned}[t]
&= \frac1{d(d+1)} +\frac c2(d^{-2}-(d+1)^{-2}) +
\frac{c^2}{6}(d^{-3}-(d+1)^{-3}) - \frac{c}{d^2(1+c)^2} + O(c^4)
\\
&= \frac1{d(d+1)}-\frac12\Bigl({d}^{-2}+(d+1) ^{-2}\Bigr) c+
\frac16\Bigl({d}^{-3}- (d+1)^{-3}+12{d}^{-2}) c^{2}-3d^{-2}c^3+O(c^4)
\end{aligned}
\end{equation}
We will also need two terms from $s_2(b,d)$:
\begin{equation}\label{eq:s2}
    s_2(b,d)
  \begin{aligned}[t]
    &= \frac12\Bigl(d^{-2}-(d+1)^{-2}\Bigr) +\frac c2(d^{-3}-(d+1)^{-3})
   - \frac{c}{d^3(1+c)^3} + O(c^2)
    \\
    &=\frac12\Bigl(d^{-2}-(d+1)^{-2}\Bigr) -\frac c2(d^{-3}+(d+1)^{-3})+O(c^2)
  \end{aligned}
\end{equation}
and of
\begin{equation}\label{eq:s3}
s_3(b,d) = \frac13\Bigl(d^{-3} - (d+1)^{-3}\Bigr) + O(c)
\end{equation}

In each truncation of $A$ to a partial sum, each term with $m\geq2$
contributes a principal term of order $c^{m+2}$. To obtain a result within
$O(c^6)$, we must keep the terms with $m=1$, $m=2$, $m=3$. And we only need
the two principal terms of $w_{0;2}$ and the principal term of $w_{0;3}$,
which we have already tabulated in the Proposition \ref{prop:develu}.

At this stage, we have therefore established that $K(b,d)-b\log(b)+
b\log(1+\frac1d)$ admits, for a fixed $d$, an asymptotic expansion to all
orders in inverse powers of $b$ for $b\to\infty$. And by combining our
results, we obtain (with $c=b^{-1}$):
\begin{equation}
K(b,d)=
\begin{aligned}[t]
&b\log(b) - b \log(1+ \frac1d)
\\
&+b\log\frac{\Gamma(1+\frac{d+1}b)}{\Gamma(1+\frac db)} - \psi(1)
\\
&-(d+\frac12)\frac{c^3}{1-c+c^2}\Bigl(\zeta(2) - 2d \zeta(3) c+3d^2\zeta(4)c^2\Bigr)
+(d^2+2d+\frac56)\zeta(3)c^5
\\
&-\frac1d + c^{-1}\log(1+\frac{c}{d(1+c)})
-(d+\frac12)\frac{c^2}{1-c+c^2}\sum_{\substack{0\leq a<b\\a\neq d}}\frac{c}{(d+ac)^2}
\\ 
&+(d^2+2d+\frac56)c^4\sum_{\substack{0\leq a <b\\a\neq d}}\frac c{(d+ca)^{3}} 
-(d+\frac12)c^5\sum_{\substack{0\leq a <b\\a\neq d}}\frac c{(d+ca)^{4}}
\\
&+O(c^6) 
\end{aligned}
\end{equation}
With the help of \textsf{Maple\texttrademark} applied to the above formula
where we replaced the finite sums of inverse powers $s_1(b,d)$, $s_2(b,d)$,
and $s_3(b,d)$ with their previously given Euler-Maclaurin expansions, we
explicitly obtained the first five coefficients:
\begin{prop}\label{prop:develKbd}
  Let $d>0$ and $b>d$ be two integers. The Kempner harmonic sum $K(b,d)=\sum'
  m^{-1}$ where the denominators are the non-zero natural numbers not having
  the digit $d$ in base $b$ is such that $K(b,d)- (b\log(b)-b\log(1+d^{-1}))$
  has, for $b\to\infty$ at a fixed $d$, an asymptotic expansion to all orders
  in inverse powers of $b$.
This development starts with:
\begin{equation*}
I(b,d,0) = b \log(b) - b\log(1+\frac1d)
+ \frac{a_1(d)}{b}
+ \frac{a_2(d)}{b^2}
+ \frac{a_3(d)}{b^3}
+ \frac{a_4(d)}{b^4}
+ \frac{a_5(d)}{b^5} + O(b^{-6})
\end{equation*}
with:
\begin{align*}
a_1(d) &= (d+\frac12) (\zeta(2)-d^{-2})
\\
a_2(d) &=-(d^2+d+\frac13)\zeta(3)+\frac{9d^2+8d+2}{6d^3(d+1)}
\\ 
a_3(d) &=(d+\frac12)(d^2+d+\frac12)\zeta(4) 
-(d+\frac12)\zeta(2) 
-(d+\frac12)\frac{2d^4+6d^3+6d^2+4d+1}{2d^4(d+1)^2}
\\ 
a_4(d) &= 
\begin{aligned}[t] 
&-(d^4+2d^3+2d^2+d+\frac15)\zeta(5) 
+(2d+1)d\zeta(3)-(d+\frac12)\zeta(2) 
\\ 
&+\frac{60d^7+240d^6+500d^5+705d^4+627d^3+331d^2+96d+12}{60d^5(d+1)^3} 
\end{aligned}
\\ 
a_5(d) &= 
\begin{aligned}[t] 
&(d+\frac12)(d^2+d+1)(d^2+d+\frac13)\zeta(6) 
-3(d+\frac12)d^2\zeta(4) 
+(3d^2+3d+\frac56)\zeta(3) 
\\ 
&-(d+\frac12)\frac{9d^6+38d^5+56d^4+44d^3+22d^2+7d+1}{3d^6(d+1)^3}
\end{aligned}
\end{align*}

\end{prop}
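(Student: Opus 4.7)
The existence of the asymptotic expansion to all orders is essentially already established in the preceding discussion, through control of the tails of the two alternating series expressing $A$ and $B$ together with the fact that each $c\mapsto c\, u_{0;m}$ is analytic on the disk $D(0,\rho)$ with $\rho > 1/2$. What remains for the proposition is the explicit extraction of the five coefficients $a_1(d),\dots,a_5(d)$ from the displayed formula for $K(b,d)$ that immediately precedes the statement.

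The plan is to expand every transcendental or rational block appearing in that displayed formula in powers of $c = 1/b$ to order $O(c^6)$ and then collect coefficients. Concretely, I would use the expansion $\log\Gamma(1+x) = -\psi(1)x + \sum_{n\geq 2}(-1)^n \zeta(n) x^n/n$ to expand $b\log(\Gamma(1+(d+1)/b)/\Gamma(1+d/b))$; an elementary series manipulation for $c^{-1}\log(1 + c/(d(1+c)))$; the geometric identity $(1-c+c^2)^{-1} = 1 + c - c^3 - c^4 + c^6 + \dots$ for the rational prefactors in $(1-c+c^2)^{-1}$; and the Euler–Maclaurin expansions \eqref{eq:s1}--\eqref{eq:s3} for $s_1, s_2, s_3$, retaining in each enough terms so that after multiplication by the respective $c^2$, $c^4$, $c^5$ prefactors the truncation error stays $O(c^6)$. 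Substituting all these expansions turns the right-hand side of the formula into a polynomial in $c$ of degree at most five whose coefficients are rational in $d$ plus rational multiples of $\zeta(2), \zeta(3), \zeta(4), \zeta(5), \zeta(6)$; reading off the coefficient of $c^j$ yields $a_j(d)$.

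The main obstacle is purely algebraic bookkeeping rather than any new mathematical idea. Several cancellations must align correctly: the constants $-1/d$ and $-\psi(1)$, coming respectively from the $A$-analysis and from the term $\psi(1+d/b)-\psi(1)$, have to combine with the constant orders of $c^{-1}\log(1 + c/(d(1+c)))$ and of the $\Gamma$-ratio expansion so as to leave no $c^0$ term (since $K(b,d)-b\log(b)+b\log(1+1/d)$ tends to $0$ as $c\to 0$); and the numerous $(d+1)^{-n}$ fragments produced by the Euler–Maclaurin boundary terms must be reassembled into the compact fractions with denominators $d^j(d+1)^l$ displayed in the statement. Because the computation is long and error-prone, I would carry it out in a computer algebra system, as the author reports doing with Maple, and cross-validate the resulting five-term approximation numerically against $K(b,d)$ as evaluated via \cite{baillie2008} or \cite{burnolirwin} at several values of $(d,b)$.
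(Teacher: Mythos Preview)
Your proposal is correct and follows essentially the same approach as the paper: the existence of the expansion is indeed established in the discussion preceding the statement, and the extraction of $a_1,\dots,a_5$ amounts to expanding each block of the displayed formula for $K(b,d)$ to order $O(c^6)$ and collecting coefficients, which the author likewise reports doing with \textsf{Maple}. Your identification of the needed ingredients (the $\log\Gamma$ series, the Euler--Maclaurin expansions \eqref{eq:s1}--\eqref{eq:s3}, the rational factor $(1-c+c^2)^{-1}$, and the cancellation of the $c^0$ terms) matches the paper's computation exactly.
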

\begin{rema}
  If we substitute $d=0$ into the factors multiplying the $\zeta(n)$, we
  recover exactly the proposition \ref{prop:develKb0} describing the behavior
  of $K(b,0)$. But here, in each coefficient, there is an additional rational
  fraction in $d$ that has a pole at $d=0$ (and moreover, the series describes
  the deviation of $K(b,d)$ from $b\log(b)-b\log(1+d^{-1})$, not from
  $b\log(b)$).
\end{rema}
Regarding the numerical verification, let $\delta_5(b,d)$ be the difference
between $K(b,d)$ and the value given in the proposition up to and including
the term in $b^{-5}$. We sought to verify that $\delta_5(b,d)\cdot
b^{5}$ behaved indeed as a multiple of $b^{-1}$. This appears to be valid. In
the data that follows, we see that even when $d$ is still moderately small,
the coefficients can be large; therefore, it is not their absolute magnitude
that interests us, but rather their behavior when $b$ is multiplied by $10$ or
by $2$.
\begin{equation*}
\begin{array}{c|cccccc} 
(b,d)&(10,1)&(100,1)&(1000,1)&(10,2)&(100,2)&(1000,2) 
\\ 
b^5\delta_5(b,d)&\np{-0.0095}&\np{0.00336}&\np{0.000375} 
&\np{-22.984}&\np{-2.8907}&\np{-0.2967} 
\\\hline 
(b,d)&(10.3)&(100.3)&(1000.3)&(10.9)&(100.9)&(1000.9) 
\\ 
b^5\delta_5(b,d)&\np{-138.36}&\np{-18.364}&\np{-1.8986}&\np{-37410.9}&\np{-6651.1}&\np{-721.35} 
\\\hline 
(b,d)&(100,20)&(1000,20)&(100,10)&(200,10)&(400,10)&(800,10) 
\\
b^5\delta_5(b,d)&\np{-618258.2}&\np{-72949.1}&\np{-12042.9}&\np{-6321.0}&\np{-3241.1}&\np{-1641.5}
\end{array}
\end{equation*}
To give an example with absolute numerical values, here is the table of $a_i(1)$:
\begin{equation*}
\begin{array}{cn{2}{33}} 
i&\multicolumn{1}{c}{a_i(1)}
\\\hline
1&0.967401100272339654708622749969038
\\
2&-1.221466107372386665932722376860050
\\
3&-1.971188973855571436523608887939658
\\
4&0.066067527317549658236125718531700
\\
5&2.963203104060488388745299065364318 
\end{array}
\end{equation*}
and that of successive approximations of $I(1000,1,0)$:
\begin{equation*} 
\begin{array}{cl} 
b\log(b)-b\log(1+\frac1d) 
& \np{6802.3947633243107508264733832137798}
\\ 
+a_1/b & \np{6802.4102729139951916936692538739123}
\\ 
+a_2/b^2& \np{6802.4101643465467548823809917399976}
\\
+a_3/b^3& \np{6802.4101652613168283745373057181486}
\\
+a_4/b^4& \np{6802.4101652530142343068832406562677}
\\
+a_5/b^5& \np{6802.4101652530915088178110862985709}
\\
I(1000,1,0) &\np{6802.410165253090787463765128313543}
\end{array}
\end{equation*}

Similarly, here is the table of coefficients $a_i(9)$ (We rounded so that they
all have the same number of digits after the decimal point):
\begin{equation*}
\begin{array}{cn{6}{30}}
i&\multicolumn{1}{c}{a_i(9)}
\\\hline
1&15.509589684440867195870660132520
\\
2&-108.567448436811288262133914700522
\\
3&914.770073492156313978150997011578
\\
4&-8302.594067654065061880891954936008
\\
5&77274.510927845642303169206910055693 
\end{array}
\end{equation*}
and that of successive approximations of $I(1000,9,0)$
\begin{equation*} 
\begin{array}{cl} 
b\log(b)-b\log(1+\frac1d) 
& \np{6802.3947633243107508264733832137798}
\\ 
+a_1/b& \np{6802.4102729139951916936692538739123}
\\ 
+a_2/b^2& \np{6802.4101643465467548823809917399976}
\\ 
+a_3/b^3& \np{6802.4101652613168283745373057181486}
\\ 
+a_4/b^4& \np{6802.4101652530142343068832406562677}
\\ 
+a_5/b^5& \np{6802.4101652530915088178110862985709}
\\ 
I(1000,9,0)& \np{6802.410165253090787463765128313543}
\end{array}
\end{equation*}
Overall, these numerical results therefore seem relatively consistent.

\section{\texorpdfstring{$d>0$ and $k>0$}{d>0 and k>0}}

Still with $d>0$, we now consider $k\geq1$. Let's first show the existence of
the asymptotic expansion. The starting point is the equation
\eqref{eq:Ikd+}. We follow the procedure already shown for $k=0$.
\begin{align} 
B &= \frac1b\int_{\Ifo{0}{1}}\Bigl(\psi(1+\frac{d+x}b)-\psi(1+\frac db)\Bigr) 
(\mu_{k}-\mu_{k-1})(\dx)
\\ 
&= \sum_{m=1}^\infty (-1)^{m-1}\left(\sum_{n=1}^\infty \frac{1}{(bn + d)^{m+1}}\right) 
(u_{k;m}-u_{k-1;m})
\\ 
&= \sum_{m=1}^\infty (-1)^{m-1}\left(\sum_{n=1}^\infty \frac{1}{(n + dc)^{m+1}}\right)
(w_{k-1;m}-w_{k;m})c^{m+1}
\end{align}
Using the bound $0\leq cw_{k-1;m} - cw_{k;m}\leq (m+1)^{-1}$ known for
$c=b^{-1}$, $b\in\NN$, $b\geq2$, we then obtain, using exactly the same
arguments as those used in the previous section, the proof of the existence of
an asymptotic expansion in powers of $c$, for $c$ of the given form and
tending to zero.

Let us now turn to the other, more delicate, part of the equation
\eqref{eq:Ikd+}, given the information usable about the moments. Again,
we follow the arguments presented for $d>0$ and $k=0$ which call upon the
property \eqref{eq:Ukrec} of the functions $U_k(z)$ (and \eqref{eq:U0rec} must
also be used if $k=1$ for $U_{k-1} = U_0$).
\begin{align} 
A &= \int_{\Ifo{0}{1}}\frac{(\mu_{k-1}-\mu_{k})(\dx)}{d + x} 
= U_{k-1}(d) - U_k(d)
\\ 
&= \delta_{k=1}\frac1d+\delta_{k>1}U_{k-2}(bd+d) 
- U_{k-1}(bd+d)+\sum_{\substack{0\leq a <b\\ a\neq d}} (U_{k-1}(bd+a)-U_k(bd+a))
\\ 
&=\sum_{\substack{0\leq a <b\\ a\neq d}} 
\sum_{m=1}^\infty(-1)^{m-1}\frac{u_{k;m}- u_{k-1;m}}{(bd+a)^{m+1}} 
\begin{aligned}[t] 
&+\delta_{k>1} 
\sum_{m=1}^\infty (-1)^{m-1}\frac{u_{k-1;m}- u_{k-2;m}}{(bd+d)^{m+1}}\\ 
&+\delta_{k=1} \Bigl(\frac1{bd+d} + \sum_{m=1}^\infty 
(-1)^{m-1}\frac{u_{0;m}}{(bd+d)^{m+1}}\Bigr) 
\end{aligned}
\end{align}
We started the series at $m=1$ because $u_{0;m}=b$ for all $m$. Let us record
the expressions obtained for $A$ and $B$ in the following expression which can
be used for numerical calculation:
\begin{prop}\label{prop:Ibdk}
Let $0<d<b$ and $k\geq1$. Then $I(b,d,k)$ is:
\begin{equation}
\begin{aligned}[t] 
& b\log(b) + \sum_{m=1}^\infty (-1)^{m-1}(u_{k;m}- u_{k-1;m})
\Bigl(\sum_{n=1}^\infty \frac{1}{(bn + d)^{m+1}}+\sum_{\substack{0\leq a <b\\ a\neq d}} \frac1{(bd+a)^{m+1}}\Bigr)
\\ 
&+\delta_{k>1} \sum_{m=1}^\infty (-1)^{m-1}\frac{u_{k-1;m}- u_{k-2;m}}{(bd+d)^{m+1}}
+\delta_{k=1} \Bigl(\frac{1}{bd+d}
+ \sum_{m=1}^\infty (-1)^{m-1}\frac{u_{0;m}}{(bd+d)^{m+1}}\Bigr)
\end{aligned}
\end{equation}
\end{prop}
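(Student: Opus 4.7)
The plan is to assemble the two series expansions for
\[
A=\int_{\Ifo{0}{1}}\frac{(\mu_{k-1}-\mu_k)(\dx)}{d+x},\qquad B=\frac1b\int_{\Ifo{0}{1}}\Bigl(\psi(1+\frac{d+x}b)-\psi(1+\frac db)\Bigr)(\mu_k-\mu_{k-1})(\dx),
\]
already sketched in the paragraphs immediately preceding the statement, and then invoke \eqref{eq:Ikd+} which asserts $I(b,d,k)=b\log(b)+A+B$.

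For $B$, I Taylor-expand the digamma difference in the variable $x$ about $x=0$. Using the polygamma formula $\frac1{m!}\psi^{(m)}(y)=(-1)^{m-1}\sum_{n=0}^\infty(n+y)^{-m-1}$ at $y=1+d/b$, the expansion has radius of convergence $b+d>1$, so it converges normally on $\Ifo{0}{1}$ and can be integrated term-by-term against the signed measure $\mu_k-\mu_{k-1}$. This produces
\[
B=\sum_{m=1}^\infty (-1)^{m-1}(u_{k;m}-u_{k-1;m})\sum_{n=1}^\infty\frac1{(bn+d)^{m+1}},
\]
which is the first of the two inner denominators appearing in the proposition.

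For $A$, I apply the functional equation \eqref{eq:Ukrec} to $U_k(d)$; when $k\geq 2$ I apply the same relation to $U_{k-1}(d)$, and when $k=1$ I use \eqref{eq:U0rec}, which contributes an extra $1/d$. Taking the difference yields
\[
A=\begin{cases}U_{k-2}(bd+d)-U_{k-1}(bd+d)+\sum_{a\in\sD,\,a\neq d}\bigl(U_{k-1}(bd+a)-U_k(bd+a)\bigr)&(k\geq 2),\\[2pt]\frac1d-U_0(bd+d)+\sum_{a\in\sD,\,a\neq d}\bigl(U_0(bd+a)-U_1(bd+a)\bigr)&(k=1).\end{cases}
\]
Each $U_j(bd+a)$ is then expanded as the geometric series $\sum_{m=0}^\infty(-1)^m u_{j;m}/(bd+a)^{m+1}$, which converges since $bd+a\geq bd>1$. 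In every difference $U_{j-1}(bd+a)-U_j(bd+a)$ the $m=0$ contributions cancel because $u_{j;0}=b$ does not depend on $j$. In the $k=1$ case the identity $\frac1d-\frac{b}{bd+d}=\frac1{bd+d}$ leaves exactly the isolated term $\frac1{bd+d}$ displayed in the statement.

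Summing $A+B+b\log(b)$ and grouping coefficients according to whether the denominator is of the form $(bn+d)$, $(bd+a)$ with $a\neq d$, or $(bd+d)$ reproduces the stated formula. The only delicate point is the bookkeeping between $k=1$ and $k\geq 2$; absolute convergence of the rearranged double sums, which justifies interchanging sums and integrals, follows from the uniform bound $0\leq u_{k;m}\leq b/(m+1)$ together with the geometric decay of $(bn+d)^{-m-1}$ and $(bd+a)^{-m-1}$ in $m$.
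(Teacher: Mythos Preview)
Your proposal is correct and follows essentially the same route as the paper: the derivation of $B$ via the Taylor expansion of the digamma difference and of $A$ via the functional equations \eqref{eq:U0rec}--\eqref{eq:Ukrec} for $U_j(d)$, followed by the geometric expansion of each $U_j(bd+a)$, is exactly the argument the paper carries out in the paragraphs preceding the proposition. The cancellation of the $m=0$ terms via $u_{j;0}=b$ and the identity $\frac1d-\frac{b}{bd+d}=\frac1{bd+d}$ in the $k=1$ case are precisely the bookkeeping steps the paper uses.
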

In this proposition, $B$ corresponds to the part of the coefficients of the
first series that is given by a series in $n\geq1$. We have already
explained that this gives a total contribution admitting an
asymptotic expansion.

The most complex term in the preceding equation is that arising from the
finite sums of inverse powers over numbers of the form $bd+a$, $a\neq d$. But
we already encountered a similar series in the previous section with $w_{0;m}$
instead of $u_{k;m}-u_{k-1;m}$, and the explanations given work identically.

When $k>1$, the second line gives a contribution of the same type as $B$, but
simpler. We will simply point out that it is important that the denominator is
not $d^{m+1}$, for example, but rather $(bd+d)^{m+1}$; otherwise, we would
need more precise information about the moments, in a controlled manner for
$m\to\infty$.

When $k=1$, the series in the second row with coefficients $u_{0;m}$ is again
easily analyzed by invoking the upper bound by $b/(m+1)$: here again, for any
$M$, a finite number of terms suffices to understand the sum up to
$O(b^{-M})$. We are then reduced to a rational expression in $c=b^{-1}$,
regular at the origin.

We have thus established that $I(b,d,k)-b\log(b)$ admits, for $b\to\infty$, an
asymptotic expansion in inverse powers of $b$, and it remains for us to give
the first few terms. In view of Proposition \ref{prop:develIb0k}, we expect to
have to distinguish $k=1$, $k=2$, and $k\geq 3$.

Let us first consider $k=1$. We then have, with $c=b^{-1}$ and the notation
from the previous section:
\begin{align}
I(b,d,1) &= b\log(b)
\begin{aligned}[t]
&+\sum_{m=1}^\infty (-1)^{m-1}(w_{0;m}- w_{1;m})c^m
\Bigl(\sum_{n=1}^\infty \frac{c}{(n + cd)^{m+1}} +s_m(b,d)\Bigr)
\\
&+\frac{c}{d+dc} + \sum_{m=1}^\infty (-1)^{m-1}\frac{(cu_{0;m})c^{m}}{(1+c)^{m+1}d^{m+1}}
\end{aligned}
\\\label{eq:100}
&=b\log(b)
\begin{aligned}[t] 
&+\sum_{m=1}^\infty (-1)^{m-1}(w_{0;m}- w_{1;m})c^m
\Bigl(\sum_{n=1}^\infty \frac{c}{(n + cd)^{m+1}} +s_m(b,d)\Bigr)
\\ 
&+\frac{c}{d(1+c)} + c^{-1}\left(\frac{c}{d(1+c)}-\log(1+\frac{c}{d(1+c)})\right)
\\ 
&- \sum_{m=1}^\infty (-1)^{m-1}\frac{w_{0;m} c^{m+1}}{(1+c)^{m+1}d^{m+1}}
\end{aligned}
\end{align}
We have already tabulated the start of the power expansions of $c$ of the
Riemann sums $s_1(b,d)$, $s_2(b,d)$, $s_3(b,d)$, which each have a finite limit
at $c=0$. Recall from Proposition \ref{prop:develu} that (note well that all $O()$
depend on $d$ and $m$):
\begin{align}
w_{0;1} - w_{1;1} &= (d+\frac12)\frac{c(1-c)}{(1-c+c^2)^2}
\\
w_{0;2} - w_{1;2} &= (d^2+2d+\frac56)c^2 + O(c^4)
\\
w_{0;3} - w_{1;3} &= (d+\frac12) c^2 + (d+\frac12)d(d+1) c^3 + O(c^4)
\\
w_{0;m} - w_{1;m} &= (d+\frac12) c^2-(d+\frac12)(\frac (m2-1)c^3 + O(c^4) \quad (m\geq4)
\\
w_{0;1} &= (d+\frac12)\frac{c}{1-c+c^2}
\\
w_{0;2} &= (d^2+2d+\frac56)c^2 + O(c^4)
\\
w_{0;m} &= O(c^2)\quad (m\geq3)
\\
\sum_{n=1}^\infty \frac{c}{(n + cd)^{2}}&=\zeta(2)c - 2d\zeta(3)c^2+3d^2\zeta(4)c^3+O(c^4)
\end{align}
The first sum in \eqref{eq:100} will give a contribution of order $c^2$ (more
precisely: equivalent to $(d+\frac12)(d(d+1))^{-1} c^2$). The principal term
contributed by the second sum is also given by $m=1$ and is
$\frac{c}{2}d^{-2}$. Given $c/(d+dc)$, we therefore have:
\begin{equation}
I(b,d,1) = b\log(b) + \frac{d+\frac12}{d^2} b^{-1} + O_d(b^{-2})
\end{equation}
We had become accustomed to $\zeta(n)$ and here there are none of at this
order... Here is some numerical data obtained with the Python code by the
author \cite{burnolirwin} (its precision is sufficient here, but to study the
higher-order approximation, for $b=1000$, you must use the \textsf{SageMath}
implementation).
\begin{equation*} 
\begin{array}{cn{1}{4}n{1}{4}n{1}{4}n{1}{4}n{1}{4}} 
&\multicolumn{1}{c}{1}&\multicolumn{1}{c}{2}&\multicolumn{1}{c}{3}
&\multicolumn{1}{c}{4}&\multicolumn{1}{c}{5}
\\
10&0.9211&0.9986&1.0687&1.1428&1.2217
\\
100&0.9897&0.9947&0.9972&0.9992&1.0012
\\
1000&0.9989&0.9994&0.9996&0.9997&0.9998
\end{array}
\end{equation*}
At the intersection of row $b$ and column $d$, we displayed
the rounded value of $(I(b,d,1)-b\log(b))bd^2/(d+\frac12)$.

We previously gave four terms of $s_1(b,d)$, two terms of $s_2(b,d)$, and the
principal term of $s_3(b,d)$ (\eqref{eq:s1}, \eqref{eq:s2},
\eqref{eq:s3}). Combined with the series expansions given above, we can easily
obtain a result accurate to within $O(c^6)$. Here it is:
\begin{prop}\label{prop:develIbd1}
  Let $d>0$ and $b>d$ be two integers. The Irwin harmonic sum
  $I(b,d,1)=\sum^{(1)} m^{-1}$ where the denominators are the natural numbers
  having exactly one digit $d$ in base $b$ is such that $I(b,d,1)- b\log(b)$
  has, for $b\to\infty$ at a fixed $d$, an asymptotic expansion to all orders
  in inverse powers of $b$. The start of this expansion is:
\begin{equation*}
I(b,d,1) = b \log(b)
+ \frac{a_1(d)}{b}
+ \frac{a_2(d)}{b^2}
+ \frac{a_3(d)}{b^3}
+ \frac{a_4(d)}{b^4}
+ \frac{a_5(d)}{b^5} + O(b^{-6})
\end{equation*}
with:
\begin{align*}
a_1(d) &= (d+\frac12) d^{-2}
\\
a_2(d) &=-\frac{9d^2+8d+2}{6d^3(d+1)}
\\
a_3(d) &= (d+\frac12)\zeta(2)
+ (d+\frac12)\frac{2d^3+4d^2+4d+1}{2d^4(d+1)^2}
\\ 
a_4(d) &= 
\begin{aligned}[t] 
&-(2d+1)d\zeta(3) +(d+\frac12)\zeta(2) 
\\ 
&-\frac{60 d^7+180 d^6+350 d^5+585 d^4+597 d^3+331 d^2+96d+12}{60d^5(d+1)^3} 
\end{aligned}
\\ 
a_5(d) &= 
\begin{aligned}[t] 
&+3(d+\frac12)d^2\zeta(4) 
-(3d^2+3d+\frac56)\zeta(3)
-(d+12)zeta(2)
\\
&-\frac{12d^8-24d^7-216d^6-387d^5-339d^4-186d^3-72d^2-18d-2}{12d^6(d+1)^3}
\end{aligned}
\end{align*}
\end{prop}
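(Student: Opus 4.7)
The existence of the expansion and the formula
\eqref{eq:100} for $I(b,d,1)-b\log(b)$ are already in hand; only the explicit
extraction of the first five coefficients remains. My plan is to identify which
of the infinitely many terms in \eqref{eq:100} can possibly contribute to
order $c^5$ (with $c=b^{-1}$), expand each surviving piece to the appropriate
order using Proposition~\ref{prop:develu} and the Euler--Maclaurin expansions
\eqref{eq:s1}, \eqref{eq:s2}, \eqref{eq:s3} of the Riemann sums, and then
collect coefficients of $c^0,c^1,\dots,c^5$.

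For the bookkeeping in the first (infinite) sum of \eqref{eq:100}, the
combined factor is $c^m(w_{0;m}-w_{1;m})\bigl(\sum_{n\ge1}c/(n+cd)^{m+1}+s_m(b,d)\bigr)$.
By Proposition~\ref{prop:develu}, $w_{0;1}-w_{1;1}=O(c)$ whereas
$w_{0;m}-w_{1;m}=O(c^2)$ for $m\ge2$, while the bracketed factor has a finite
limit at $c=0$. Hence the $m$-th term is $O(c^{m+2})$ for $m\ge2$ and $O(c^2)$
for $m=1$, so only $m\in\{1,2,3\}$ can enter up to $O(c^5)$, and we need
$w_{0;1}-w_{1;1}$ together with $s_1$ and $\sum_n c/(n+cd)^2$ to order $c^4$
in total, $w_{0;2}-w_{1;2}$ with $s_2$ and $\sum_n c/(n+cd)^3$ to total order
$c^5$, and only the leading parts of $w_{0;3}-w_{1;3}$, $s_3$,
$\sum_n c/(n+cd)^4$. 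All of these expansions are either supplied in the
excerpt or follow from the standard Taylor expansion
$\sum_{n\ge1}c/(n+cd)^{m+1}=\sum_{p\ge0}\binom{-m-1}{p}d^pc^{p+1}\zeta(m+1+p)$.
A parallel analysis applied to the last sum in \eqref{eq:100} (which, via
$u_{0;m}=b/(m+1)-w_{0;m}$, splits off the log contribution
$c^{-1}\bigl(c/(d(1+c))-\log(1+c/(d(1+c)))\bigr)$ plus a series in
$w_{0;m}c^{m+1}/((1+c)d)^{m+1}$) shows that among the remaining terms only
$m=1$ and $m=2$ matter, with $w_{0;1}$ needed to order $c^4$ and $w_{0;2}$
only to leading order.

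With these truncations identified, the proof reduces to a finite algebraic
computation: expand each surviving factor to the prescribed order in $c$,
multiply the three factors in each $m$-term, and add the contribution of the
standalone $c/(d(1+c))$ and the subtracted logarithm (whose Taylor series in
$c$ is explicit). The $\zeta(n)$ values arise exclusively from the
$\sum_{n\ge1}c/(n+cd)^{m+1}$ factors, while the rational functions of $d$
come from $s_m(b,d)$, $c/(d(1+c))$, and $1/((1+c)d)^{m+1}$; grouping by
powers of $c$ yields the claimed formulas for $a_1(d),\dots,a_5(d)$.

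The only substantive obstacle is the algebraic bookkeeping: every coefficient
$a_j(d)$ is a sum of several monomials in $d,(d+1)^{-1}$ mixed with
$\zeta(2),\dots,\zeta(6)$ coming from three different expansions, and the
leading cancellations (for example, the exact cancellation of all $c^0$
contributions, so that the constant term vanishes and $a_1(d)$ is real) must
line up correctly. This is the point where the author hands the task to
\textsf{Maple}; conceptually nothing new is needed beyond the ingredients
already assembled in Propositions \ref{prop:develu} and
\ref{prop:Ibdk_digamma}--\ref{prop:digammad+} and the expansions
\eqref{eq:s1}--\eqref{eq:s3}.
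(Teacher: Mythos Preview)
Your proposal is correct and follows essentially the same route as the paper: starting from \eqref{eq:100}, you isolate the finitely many $m$-terms that can contribute up to $O(c^6)$ (namely $m\le3$ in the first sum and $m\le2$ in the residual $w_{0;m}$-sum), expand each factor to the required precision using Proposition~\ref{prop:develu} and the Euler--Maclaurin data \eqref{eq:s1}--\eqref{eq:s3}, and collect coefficients. The only minor inaccuracy is cosmetic: the range ``$\zeta(2),\dots,\zeta(6)$'' overshoots, as only $\zeta(2),\zeta(3),\zeta(4)$ actually survive in $a_1,\dots,a_5$.
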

For the purpose of examining numerical validity, let $\delta_5(b,d)$ be the
difference between the value of $I(b,d,1)$ calculated using the SageMath code
by \cite{burnolirwin} and the approximation given by the asymptotic expansion
up to and including the order $O(b^{-5})$. The following table gives the
values of $b^6 \delta_5(b,d)$ for $d\in\{1,2,\dots,8\}$ and
$b\in\{125,250,500,1000\}$.
\begin{equation*} 
\begin{array}{cllllllll} 
&\multicolumn{1}{c}{1}&\multicolumn{1}{c}{2}&\multicolumn{1}{c}{3} 
&\multicolumn{1}{c}{4}&\multicolumn{1}{c}{5} 
&\multicolumn{1}{c}{6}&\multicolumn{1}{c}{7}&\multicolumn{1}{c}{8}
\\
125&-29.89&2.04&-125.04&-596.91&-1710.36&-3846.98&-7468.02&-13110.55
\\
250&-30.15&2.32&-125.75&-605.27&-174 4.25&-3943.24&-7691.90&-13566.66
\\
500&-30.28&2.47&-126.09&-609.52&-1761.67&-3993.12&-7808.76&-13806.41
\\
1000&-30.34&2.55&-126.27&-611.67&-1770.50&-4018.52&-7868.48&-13929.37
\end{array}
\end{equation*}
Here are the $b^5\delta_4(b,d)$ and the last line
is that of the coefficients $a_5(d)$:
\begin{equation*}
\begin{array}{cllllllllll}
&\multicolumn{1}{c}{1}&\multicolumn{1}{c}{2}&\multicolumn{1}{c}{3}
&\multicolumn{1}{c}{4}&\multicolumn{1}{c}{5}
&\multicolumn{1}{c}{6}&\multicolumn{1}{c}{7}&\multicolumn{1}{c}{8} 
&\multicolumn{1}{c}{9}&\multicolumn{1}{c}{10}
\\
125&6.78&7.35&51.71&148.64&314.59&565.86&918.21&1386.80&1986.25&2730.59
\\
250&6.90&7.34&52.21&150.99&321.30&580.87&947.19&1437.42&2068.40&2856.64
\\
500&6.96&7.34&52.46&152.20&324.75&588.65&962.34&1464.08&2111.97&2923.93
\\
1000&6.99&7.33&52.58&152.80&326.50&592.62&970.09&1477.76&2134.41&2958.72
\\
&7.02&7.33&52.71&153.42&328.27&596.64&977.95&1491.69&2157.32&2994.32
\end{array}
\end{equation*}
It is interesting to note that the number $d=2$ seems to have a $a_6(d)$
significantly smaller than the others, and moreover, we see that
$b^5\delta_4(b,2)$ converges much faster to its limit $a_5(2)$ than what we
observe for $d\neq2$. All of this seems consistent since the lines above are
approximately $a_5(d)+a_6(d)/b$.

Let us move on to $k\geq2$. Expressed with $c=b^{-1}$ and $w_{k;m} =
\frac{b}{m+1}-u_{k;m}$, the formula from proposition \ref{prop:Ibdk} gives:
\begin{equation}
I(b,d,1) = b\log(b)
\begin{aligned}[t]
&+\sum_{m=1}^\infty (-1)^{m-1}(w_{k-1;m}- w_{k;m})c^m
\Bigl(\sum_{n=1}^\infty \frac{c}{(n + cd)^{m+1}} +s_m(b,d)\Bigr)
\\
&+\sum_{m=1}^\infty (-1)^{m-1}(w_{k-2;m}- w_{k-1;m})\frac{c^{m+1}}{d^{m+1}(1+c)^{m+1}}
\end{aligned}
\end{equation}
The term with $m=1$ in the first sum will contribute, by \eqref{eq:diffwk1}, a
$c^{2k-1+1}=c^{2k}$. While the term with $m=1$ in the second sum contributes,
for the same reason, a $c^{2k-3+2}=c^{2k-1}$. Therefore, the first term of the
expansion will be of order $c^{2k-1}$ (just as for $k=1$, it was or order $b^{-1}$). If we
want to give five coefficients, we must go up to $c^{2k+3}$ and therefore work
to within $O(c^{2k+4})$. In the first sum, the term indexed by $m\geq2$
contributes $c^{2k+m}$, so we keep $m=2$ and $m=3$. In the second sum, the
term indexed by $m\geq2$ contributes $c^{2k-1+m}$, so we must take into
account an additional $m=1$, $m=2,3,4$. We already know the first two terms of
$w_{k-2;2}- w_{k-1;2}$, but we need three, so we need the first three terms of
$w_{k-2;2}$.

Let us start with $k=2$, so we need $w_{0;2}$. The recurrence relation
\eqref{eq:recurw} gives here
\begin{equation}
(1-c^2+c^3)w_{0;2} = 2(c^2\gamma_1)cw_{0:1}+((d+1)^3-d^3)\frac{c^2}{3}
\end{equation}
with $c^2\gamma_1 = \frac12 - \frac12 c - dc^2$ and
$w_{0;1}=(d+\frac12)c/(1-c+c^2)$. We obtain:
\begin{equation}
w_{0;2} = (d^2+2d+\frac56)c^2 - (d^2-\frac13)c^4 + O(c^5)
\end{equation}
For $w_{1;2}$, the recurrence relation is \eqref{eq:recurw12c}:
\begin{equation}
(1-c^2+c^3)w_{1;2} = 2c (c^2\gamma_1) w_{1;1} + c^3(w_{0;2} + 2 d w_{0;1})
\end{equation}
with $w_{1;1} = (d+\frac12)c^3/(1-c+c^2)^2$, and we see that to obtain the
first three terms of $w_{1;2}$, we only need in this recurrence than the first
two (one of which is zero) of $w_{0;2}$. We obtain:
\begin{equation}
w_{1;2} = \frac12(2d+1)^2c^4+(3d^2+4d+\frac43)c^5+0\cdot c^6 + O(c^7)
\end{equation}
which is the formula of Proposition \ref{prop:develu} but we see that the
$O(c^6)$ was an $O(c^7)$.

The recurrence relation for $w_{2;2}$, which we already know is of order $c^{6}$, is:
\begin{equation}
(1-c^2+c^3)w_{2;2} = 2c (c^2\gamma_1) w_{2;1} + c^3(w_{1;2} + 2 d w_{1;1})
\end{equation}
It must be evaluated to within $O(c^9)$, and therefore we need only the first
two terms of $w_{1;2}$, but three terms of $w_{1;1}$. As for
$w_{2;1}$, it is $(d+\frac12)c^5/(1-c+c^2)^3$, and we also need its first
three terms. To determine the third term of $w_{j;2}$, we will not need to
set-up an induction; it is determined by the first two, which we already know from
Proposition \ref{prop:develu}. For $w_{j;2}$ with $j=2$, we obtain:
\begin{equation}
w_{2;2} = \frac12(2d+1)^2 c^6+\frac32(2d+1)^2 c^7+(5d^2+6d+\frac{11}{6})c^8+O(c^9)
\end{equation}
For $j\geq3$, if we set
\begin{equation}
w_{j;2} = \frac12(2d+1)^2 c^{2j+2}+\frac{j+1}2(2d+1)^2c^{2j+3}+ \alpha_j c^{2j+4} + O(c^{2j+5})
\end{equation}
then the relation determining $\alpha_j$ comes from
\begin{equation}
(1-c^2+c^3)w_{j;2} = 2c (c^2\gamma_1) w_{j;1} + c^3(w_{j-1;2} + 2 d w_{j-1;1})
\end{equation}
with $w_{j;1} = (d+\frac12)c^{2j+1}/(1-c+c^2)^{j+1}$, and using only
the first two terms of $w_{j-1;2}$ which are known (and indicated above
for $j$). After calculation, we obtain:
\begin{equation}
w_{j;2} = \frac12(2d+1)^2 c^{2j+2}\left(1 + (j+1) c + \frac{j(j+1)}2c^2 + O(c^3)\right)\qquad(j\geq3)
\end{equation}

If we return to our $k\geq2$, we need $w_{j;2}$ with $j=k-2$. But this means
that each of $k=2$, $k=3$, $k=4$, will have its specific behavior and (a
priori) a common form will only appear with regard to the coefficient of
$b^{-2k-3}$ from $k=5$ onwards. Since the exact value of this fifth
coefficient does not appear to be of great importance, and despite the time
spent here determining the $w_{j;2}$ with three terms, we will only use this
information for $k=2$ (therefore $w_{0;2}$). For $k\geq3$ we will only give
the first four coefficients of the asymptotic expansion.

Here is the result for $k=2$:
\begin{prop}\label{prop:develIbd2}
  Let $d>0$ and $b>d$ be two integers. The Irwin harmonic sum
  $I(b,d,2)=\sum^{(2)} m^{-1}$ where the denominators are the natural numbers
  having exactly two digits $d$ in base $b$ is such that $I(b,d,2)- b\log(b)$
  has, for $b\to\infty$ at a fixed $d$, an asymptotic expansion to all orders
  in inverse powers of $b$. The start of this expansion is:
\begin{equation*}
I(b,d,2) = b \log(b)
+ \frac{a_1(d)}{b^3}
+ \frac{a_2(d)}{b^4}
+ \frac{a_3(d)}{b^5}
+ \frac{a_4(d)}{b^6}
+ \frac{a_5(d)}{b^7} + O(b^{-8})
\end{equation*}
with:
\begin{align*}
a_1(d) &= (d+\frac12) d^{-2}
\\
a_2(d) &=-\frac{d+\frac12}{d^2(d+1)}
\\
a_3(d) &= (d+\frac12)\zeta(2)
- \frac{30d^3+70d^2+47d+10}{12d^3(d+1)^2}
\\ 
a_4(d) &= 
\begin{aligned}[t] 
&-(2d+1)d\zeta(3) +(2d+1)\zeta(2) 
\\ 
&-\frac{12d^6-60d^5-258d^4-342d^3-212d^2-61d-6}{12 d^4(d+1)^3} 
\end{aligned}
\\ 
a_5(d) &= 
\begin{aligned}[t] 
&+3(d+\frac12)d^2\zeta(4) 
-(d+\frac12)(6d+1)\zeta(3) 
\\
&-\frac{60d^7+240d^6+440d^5+561d^4+465d^3+219d^2+54d+6}{12d^5(d+1)^3}
\end{aligned}
\end{align*}
\end{prop}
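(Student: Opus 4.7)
The plan is to specialize Proposition~\ref{prop:Ibdk} to $k=2$, rewrite it using $w_{j;m}=b/(m+1)-u_{j;m}$ as in the preceding $k=1$ discussion, and extract the coefficients of $c=b^{-1}$ up to $c^{7}$ from the identity
\begin{align*}
I(b,d,2)-b\log(b) &= \sum_{m=1}^{\infty}(-1)^{m-1}(w_{1;m}-w_{2;m})c^{m}\Bigl(\sum_{n=1}^{\infty}\frac{c}{(n+cd)^{m+1}}+s_m(b,d)\Bigr)\\
&\quad +\sum_{m=1}^{\infty}(-1)^{m-1}(w_{0;m}-w_{1;m})\frac{c^{m+1}}{d^{m+1}(1+c)^{m+1}},
\end{align*}
the existence of this asymptotic expansion having already been established earlier in the section.

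The first step is the order analysis outlined by the author just above the proposition: up to $O(c^{8})$ only $m\in\{1,2,3\}$ matter in the first sum (entering at orders $c^{4}$, $c^{6}$, $c^{7}$) and only $m\in\{1,2,3,4\}$ in the second (entering at orders $c^{3}$, $c^{5}$, $c^{6}$, $c^{7}$). This pins down the accuracy to which each ingredient must be known. The differences $w_{j;m}-w_{j';m}$ are read from Proposition~\ref{prop:develu} together with the refinements of $w_{0;2}$, $w_{1;2}$, and $w_{2;2}$ carried out immediately above the proposition, each going one order beyond Proposition~\ref{prop:develu} and resting on the explicit expression $w_{j;1}=(d+\tfrac12)c^{2j+1}/(1-c+c^{2})^{j+1}$. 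The finite Riemann-type sums $s_{1}(b,d)$, $s_{2}(b,d)$, $s_{3}(b,d)$ are read from the Euler--Maclaurin expansions \eqref{eq:s1}, \eqref{eq:s2}, \eqref{eq:s3}. The infinite series in $n$ is expanded via
\[
\sum_{n=1}^{\infty}\frac{c}{(n+cd)^{m+1}} = \zeta(m+1)\,c - (m+1)d\,\zeta(m+2)\,c^{2} + \binom{m+2}{2}d^{2}\,\zeta(m+3)\,c^{3} - \cdots,
\]
keeping three terms for $m=1$, two for $m=2$, and only the leading term for $m=3$.

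The second step is the substitution and collection of coefficients, which I would execute with a computer algebra system, as the author does for $k=0$ via \textsf{Maple\texttrademark}. The main obstacle is not conceptual but rather the bookkeeping load: each of $a_{3}(d)$, $a_{4}(d)$, $a_{5}(d)$ aggregates pieces from several $(m,\,\text{order})$ slots in both sums, producing polynomial-in-$d$ numerators over denominators of the form $d^{j}(d+1)^{\ell}$ together with $\zeta(2)$, $\zeta(3)$, $\zeta(4)$ contributions that must line up with the stated closed forms. Particular care is needed for the $m=1$ contribution to the first sum, where the rational factor $(1-c)/(1-c+c^{2})^{3}$ coming from $w_{1;1}-w_{2;1}$ must be expanded correctly to third order in $c$. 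As a final sanity check I would verify, analogously to the $k=1$ tables, that $b^{8}\bigl(I(b,d,2)-b\log(b)-\sum_{i=1}^{5}a_{i}(d)b^{-2-i}\bigr)$ approaches a finite limit as $b\to\infty$ at several small $d$, using the \textsf{SageMath} code of \cite{burnolirwin} to compute $I(b,d,2)$.
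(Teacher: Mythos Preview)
Your proposal is correct and follows essentially the same route as the paper: specializing Proposition~\ref{prop:Ibdk} to $k=2$ in its $w$-form, truncating the two sums at $m\leq3$ and $m\leq4$ respectively via the order count you give, feeding in the $s_m(b,d)$ from \eqref{eq:s1}--\eqref{eq:s3}, the $\zeta$-series expansion of $\sum_n c/(n+cd)^{m+1}$, and the moment data from Proposition~\ref{prop:develu} augmented by the extra term of $w_{0;2}$ computed just above the statement. The only minor remark is that among the three refined $w_{j;2}$ you cite, only the third term of $w_{0;2}$ is actually needed for $k=2$ (the paper notes this explicitly); the refined $w_{1;2}$ and $w_{2;2}$ enter only for larger $k$, but invoking them does no harm.
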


Now suppose $k\geq3$. The first term of the expansion will be of order
$c^{2k-1}$. If we want a result within $O(c^{2k+3})$, therefore with four
terms, it suffices in the first sum to use $m=1$ and $m=2$ and in the second
$m=1, 2, 3$. For the first sum, we will need the $w_{j;1}$, which we know
exactly, for $j=k,k-1$, and only the leading term of $w_{k-1;2}$, which is
always $2(d+\frac12)c^{2k}$ since $k>1$. For the second sum and $m=2$, we need
the two leading terms of $w_{k-2;2}$, and therefore we must distinguish $k=3$
from $k>3$ according to Proposition \ref{prop:develu} with regard to the
second coefficient which will ultimately contribute to $c^{2k+2}$ (therefore
to $c^8$ for $k=3$). For $m=3$, we only need the leading term of $w_{k-2;3}$,
which is always $(d+\frac12)c^{2k-2}$.  So, finally, for $k=3$, there will
just be a small correction to make for the last coefficient of the asymptotic
expansion compared to the general formula valid for $k\geq3$. Here is the
final proposition of this paper.

\begin{prop}\label{prop:develIbdk}
  Let $d>0$ and $b>d$ be two integers. Let $k\geq3$. The Irwin harmonic sum
  $I(b,d,k)=\sum^{(k)} m^{-1}$ where the denominators are the natural numbers
  having exactly $k$ times the digit $d$ in base $b$ is such that $I(b,d,k)-
  b\log(b)$ has, for $b\to\infty$ at a fixed $d$ (and $k$), an asymptotic
  expansion to all orders in inverse powers of $b$. The start of this
  expansion is:
\begin{equation*}
I(b,d,k) = b \log(b)
+ \frac{a_1(d)}{b^{2k-1}}
+ \frac{a_2(d)}{b^{2k}}
+ \frac{a_3(d)}{b^{2k+1}}
+ \frac{a_4(d)}{b^{2k+2}}
+ O(b^{-2k-3})
\end{equation*}
with, for $k\geq4$:
\begin{align*}
a_1(d) &= (d+\frac12) d^{-2}
\\
a_2(d) &=\frac{(d+\frac12)\bigl((k-2)d+k-3\bigr)}{d^2(d+1)}
\\ 
a_3(d) &= 
\begin{aligned}[t] 
&(d+\frac12)\zeta(2)
\\ 
&+ (d+\frac12)\frac{(k^2-5k+4)d^3+(2k^2-12k+8)d^2+(k^2-7k+1)d-2}{2d^3(d+1)^2} 
\end{aligned}
\\ 
a_4(d) &=(d+\frac12)\Biggl( 
\begin{aligned}[t] 
&-2d\zeta(3) +k\zeta(2) + 
\\ 
&+\frac{ 
\left\{\begin{aligned}
(k^3-9k^2+14k)d^5 & +(3k^3-30k^2+45k+42)d^4
\\
& +(3k^3-33k^2+39k+114)d^3
\\
& +(k^3-12k^2+2k+108)d^2+(-6k+43)d+6
\end{aligned}\right\}
}{6 d^4(d+1)^3}\Biggr)
\end{aligned}
\end{align*}
For $k=3$, the formulas are valid except for $a_4(d)$. One must then add
$(d^2-\frac13)/d^3$ to it.
\begin{equation*}
a_4(d,k=3) =
-(2d+1)d\zeta(3) +3(d+\frac12)\zeta(2)
-\frac{12d^6-50d^4-81d^3-71d^2-33d-6}{12 d^4(d+1)^3}
\end{equation*}
\end{prop}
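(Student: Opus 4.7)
The existence of an asymptotic expansion for $I(b,d,k)-b\log(b)$ was already established in the opening paragraphs of the section: it follows from Proposition \ref{prop:Ibdk} by writing $u_{j;m}=b/(m+1)-w_{j;m}$ and using (a) the elementary bounds on the moments together with $b$-uniform dominations on $\sum_n (bn+d)^{-m-1}$ and $s_m(b,d)$ to truncate each of the two series in $m$ at any chosen $M$ with remainder $O(b^{-M})$, and (b) the fact that the resulting truncated expression is a rational fraction in $c=b^{-1}$ regular at the origin whose leading order can then be read off from Proposition \ref{prop:develu}. So the real task is to extract four explicit coefficients.

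I would proceed exactly as in the $k=1$ and $k=2$ cases, starting from the rewritten formula
\begin{align*}
I(b,d,k)-b\log(b)
&= \sum_{m\geq1}(-1)^{m-1}(w_{k-1;m}-w_{k;m})c^m\Bigl(\sum_{n\geq1}\tfrac{c}{(n+cd)^{m+1}}+s_m(b,d)\Bigr)\\
&\quad + \sum_{m\geq1}(-1)^{m-1}(w_{k-2;m}-w_{k-1;m})\tfrac{c^{m+1}}{d^{m+1}(1+c)^{m+1}}.
\end{align*}
A term-by-term order count using Proposition \ref{prop:develu} shows that in the first sum the term indexed by $m$ contributes at order $c^{2k+m-1}$ (the difference $w_{k-1;m}-w_{k;m}$ has valuation $\geq 2k$ except for $m=1$ where it is $2k-1$ and then gets multiplied by the extra $c$; using the exact formula for $w_{j;1}$ one sees the total valuation is $2k$), while in the second sum the term indexed by $m$ contributes at order $c^{2k-2+m}$. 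To reach accuracy $O(c^{2k+3})$ it therefore suffices to keep $m\in\{1,2\}$ in the first sum and $m\in\{1,2,3\}$ in the second.

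The key ingredients are then the expansions already available: the exact formula $w_{j;1}=(d+\tfrac12)c^{2j+1}/(1-c+c^2)^{j+1}$ which, expanded to order $c^{2j+4}$, feeds both sums via $j=k,k-1,k-2$; the leading term $w_{k-1;2}=2(d+\tfrac12)^2 c^{2k}+O(c^{2k+1})$ needed for the first sum; the two leading terms of $w_{k-2;2}$ for the second sum (this is where the split $k=3$ versus $k\geq 4$ arises, since $w_{1;2}$ has $(3d^2+4d+\tfrac43)c^5$ as its second coefficient in place of the generic $(d+\tfrac12)^2(2k-2)c^{2k+3}=3(d+\tfrac12)^2 c^{2k+3}|_{k=3}$, producing exactly a discrepancy proportional to $(d^2-\tfrac13)/d^3$ after multiplication by $c^3/d^3$); the leading term $(d+\tfrac12)c^{2k-2}$ of $w_{k-2;3}$; the Euler-Maclaurin/Taylor expansions \eqref{eq:s1} and \eqref{eq:s2} of $s_1,s_2$; and the expansion $\sum_{n\geq1}(n+cd)^{-2}=\zeta(2)-2d\zeta(3)c+3d^2\zeta(4)c^2+O(c^3)$, together with the leading $\zeta(m+1)$ for $m=2$.

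Plugging everything in and collecting coefficients of $c^{2k-1}$, $c^{2k}$, $c^{2k+1}$, $c^{2k+2}$ yields the four displayed coefficients $a_1,\dots,a_4$, the $\zeta$-values arising solely from the $\sum_n(n+cd)^{-m-1}$ pieces. The main obstacle is purely bookkeeping: keeping careful track of how the $k$-dependence propagates through the mixed contributions from $w_{k-2;2}$ (which stabilizes only for $k\geq4$) and from the $s_1(b,d)$-expansion of the first sum (whose $c$-coefficient multiplies the exact rational fraction $w_{k-1;1}-w_{k;1}$). As in the $k=2$ case, symbolic computation (Maple/SageMath) is the natural way to verify the final polynomial in $d$ appearing in the numerator of $a_4(d)$, and the consistency check $k=3$: the generic formula minus the correction $(d^2-\tfrac13)/d^3$ should reproduce the separately-stated $a_4(d,k=3)$, which is exactly how I would guard against arithmetic slips.
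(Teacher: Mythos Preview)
Your plan is exactly the paper's approach: start from Proposition \ref{prop:Ibdk} rewritten with the $w_{j;m}$, truncate the two $m$-series at $m\in\{1,2\}$ and $m\in\{1,2,3\}$, feed in Proposition \ref{prop:develu} together with the Euler--Maclaurin expansions \eqref{eq:s1}--\eqref{eq:s2} and the Taylor expansion of $\sum_n(n+cd)^{-m-1}$, and trace the $k=3$ anomaly to the exceptional second coefficient of $w_{1;2}$. A few arithmetic slips to fix before you compute: your stated order formulas $c^{2k+m-1}$ and $c^{2k-2+m}$ are off by one for $m\geq2$ (the correct orders are $c^{2k+m}$ and $c^{2k-1+m}$, which still justify your truncation choices); the generic second coefficient of $w_{k-2;2}$ evaluated at $k=3$ is $4(d+\tfrac12)^2$, not $3(d+\tfrac12)^2$ (the discrepancy $(d^2-\tfrac13)$ you quote is nevertheless correct); and the correction $(d^2-\tfrac13)/d^3$ is to be \emph{added} to the generic $a_4$ at $k=3$, not subtracted.
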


Here are some numerical observations for $k=2$, $k=3$, $k=4$ and $k=5$. We
evaluated the values of $I(b,d,k)$ for $b=25$, $125$, $250$, $500$, $1000$, and
for $d\in\{1,\dots,10\}$ (we only show some of them of obey page constraints)
using the \textsf{SageMath} code by the author \cite{burnolirwin}, and we
multiply their deviations
from the approximations by the power of $b$ corresponding to the first omitted
term. For $k=5$, this means we multiply by $b^{13}$, so if $b=1000$, by
$10^{39}$. We performed these calculations with $68$ decimal places of precision
(before subtracting for the difference), which is excessive, but better safe
than sorry.

For this first table $k=2$, we used the five terms (from $b^{-3}$ to $b^{-7}$)
given by Proposition \ref{prop:develIbd2}. The difference is therefore
here multiplied by $b^8$.
\begin{equation*}
\begin{array}{cllllllllll}
(k=2) &\multicolumn{1}{c}{d=1}&\multicolumn{1}{c}{2}&\multicolumn{1}{c}{3}
&\multicolumn{1}{c}{4}&\multicolumn{1}{c}{5} 
&\multicolumn{1}{c}{6}&\multicolumn{1}{c}{7}&\multicolumn{1}{c}{8}
\\
b=25 &36.74&22.18&-24.62&-284.46&-969.22&-2335.30&-4675.99&-8314.67%
\\
125 &40.65&24.71&-21.32&-305.32&-1094.98&-2741.15&-5675.32&-10406.06%
\\
250 &41.19&25.07&-20.76&-307.96&-1112.67&-2801.26&-5829.43&-10740.42%
\\
500 &41.46&25.25&-20.47&-309.27&-1121.70&-2832.23&-5909.50&-10915.42%
\\
1000&41.60&25.35&-20.32&-309.93&-1126.26&-2847.95&-5950.31&-11004.97%
\end{array}
\end{equation*}

For the following three tables, we used the four terms of $b^{-2k+1}$ to
$b^{-2k-2}$ as given by Proposition \ref{prop:develIbdk}.

Difference multiplied by $b^9$:
\begin{equation*}
\begin{array}{cllllllllll}
(k=3) &\multicolumn{1}{c}{d=1}&\multicolumn{1}{c}{2}&\multicolumn{1}{c}{3}
&\multicolumn{1}{c}{4}&\multicolumn{1}{c}{5}
&\multicolumn{1}{c}{6}&\multicolumn{1}{c}{7}&\multicolumn{1}{c}{8}
&\multicolumn{1}{c}{9}%
\\
b=25& -9.48 & -11.82 & 6.47 & 60.13 & 160.66 & 317.67 & 539.24 & 832.18 & 1202.30%
\\
125 & -9.46 & -11.86 & 7.14 & 67.04 & 185.29 & 378.69 & 663.43 & 1055.04 & 1568.46%
\\
250 & -9.47 & -11.88 & 7.22 & 67.98 & 188.83 & 387.86 & 682.80 & 1091.05 & 1629.64%
\\
500 & -9.48&-11.88&7.25&68.45&190.64&392.59&692.89&1109.95&1661.96%
\\
1000& -9.48&-11.89&7.27&68.69&191.56&395.00&698.04&1119.62&1678.58%
\end{array}
\end{equation*}

Deviation multiplied by $b^{11}$:
\begin{equation*} 
\begin{array}{cllllllllll}
(k=4) &\multicolumn{1}{c}{d=1}&\multicolumn{1}{c}{2}&\multicolumn{1}{c}{3} 
&\multicolumn{1}{c}{4}&\multicolumn{1}{c}{5} 
&\multicolumn{1}{c}{6}&\multicolumn{1}{c}{7}&\multicolumn{1}{c}{8} 
&\multicolumn{1}{c}{9}%
\\
b=25& -8.00&-13.06&-2.08&40.72&127.40&267.98&470.89&743.21&1090.97%
\\
125 & -8.02&-12.68&-1.46&45.97&147.34&319.64&579.16&941.59&1422.00%
\\
250 & -8.03 & -12.64 & -1.39 & 46.67 & 150.19 & 327.36 & 596.01 & 973.59 & 1477.21%
\\
500 & -8.04 & -12.62 & -1.36 & 47.02 & 151.65 & 331.35 & 604.78 & 990.36 & 1506.37%
\\
1000 & -8.05 & -12.61 & -1.35 & 47.20 & 152.38 & 333.38 & 609.25 & 998.96 & 1521.36%
\end{array}
\end{equation*}

Difference multiplied by $b^{13}$:
\begin{equation*} 
\begin{array}{cllllllllll}
(k=5) &\multicolumn{1}{c}{d=1}&\multicolumn{1}{c}{2}&\multicolumn{1}{c}{3} 
&\multicolumn{1}{c}{4}&\multicolumn{1}{c}{5} 
&\multicolumn{1}{c}{6}&\multicolumn{1}{c}{7}&\multicolumn{1}{c}{8} 
&\multicolumn{1}{c}{9}%
\\
b=25& -11.06&-13.73&-7.47&26.28&100.56&225.95&411.26&663.90&990.17 %
\\
125 & -10.73 & -12.79 & -6.59 & 30.51 & 116.86 & 269.74 & 505.66 & 840.44 & 1289.31%
\\
250 & -10.70 & -12.68 & -6.49 & 31.06 & 119.16 & 276.26 & 520.30 & 868.85 & 1339.11%
\\
500 & -10.69 & -12.63 & -6.45 & 31.33 & 120.34 & 279.62 & 527.92 & 883.74 & 1365.40%
\\
1000 & -10.68&-12.60&-6.42&31.47&120.94&281.33&531.80&891.36&1378.91%
\end{array}
\end{equation*}

\singlespacing

\def\arxivurl#1{\href{https://arxiv.org/abs/#1}{\textsf{arXiv:#1}}}
\providecommand\bibcommenthead{}
\def\blocation#1{\unskip}


\end{document}